\newtheorem{theorem}{Theorem}[section]
\newtheorem{lemma}[theorem]{Lemma}
\newtheorem{corollary}[theorem]{Corollary}
\newtheorem{proposition}[theorem]{Proposition}
\tikzstyle{snode}=[circle,draw=black,fill=white,thick, inner sep=0pt ,minimum size=1.2mm]
\tikzstyle{bnode}=[circle ,draw=black,fill=black,thick, inner sep=0pt ,minimum size=1.2mm]
\newenvironment {proof} {\noindent{\em Proof.}}{\hspace*{\fill}$\Box$\par\vspace{4mm}}
\newcommand{\ml}{l\kern-0.55mm\char39\kern-0.3mm}
\newenvironment{theorem-non}[1]{\trivlist \item [\hskip \labelsep {\bf #1}]\ignorespaces\it}{\endtrivlist}
\begin{document}
\title{\textbf{Proper rainbow connection number of graphs}}

\author{\small \ Trung Duy Doan$^{1}$ ,\ Ingo Schiermeyer$^{2}$
\\[0.2cm]
\small $^{1}$ School of Applied Mathematics and Informatics \\
\small Hanoi University of Science and Technology, Hanoi, Vietnam \\
\small $^{2}$Institut f\"ur Diskrete Mathematik und Algebra \\
\small Technische Universit\"at Bergakademie Freiberg \\
\small 09596 Freiberg, Germany\\[0.2cm]
{\small Email: trungdoanduy@gmail.com, Ingo.Schiermeyer@tu-freiberg.de}\\
}

\date{\today}

\maketitle

\begin{abstract}
A path in an edge-coloured graph is called \emph{rainbow path} if its edges receive pairwise  distinct colours. An edge-coloured graph is said to be \emph{rainbow connected} if any two distinct vertices of the graph are connected by a rainbow path. The
minimum $k$ for which there exists such an edge-colouring is the rainbow connection number $rc(G)$ of $G.$
Recently, Bau et al. \cite{BJJKM2018} introduced this concept with the additional requirement that the edge-colouring must be proper. 
The \emph{proper rainbow connection number} of $G$, denoted by $prc(G)$, is the minimum number of colours needed in order to make it properly rainbow connected.

In this paper we first prove an improved upper bound $prc(G) \leq n$ for every connected graph $G$ of order $n \geq 3.$ Next we show that the difference $prc(G) - rc(G)$ can be arbitrarily large. Finally, we present several sufficient conditions for graph classes satisfying $prc(G) = \chi'(G).$

\medskip

\textbf{Keywords:} edge-colouring; proper; rainbow connection number; proper rainbow connection number;\\
\textbf{AMS subject classification 2010:} 05C15, 05C40, 05C07.\\
\end{abstract}

\section{Introduction}
 We use \cite{West2001} for terminology and notation not defined here and consider only simple, finite and undirected graphs. Let $G$ be a graph. We denote by $V(G), E(G), n, m, \Delta(G),$ $diam(G)$ the vertex set, the edge set, number of vertices, number of edges, maximum degree, and diameter of $G$, respectively. Let $K_n,C_n,P_n$ be a complete graph, a cycle and a path on $n$ vertices, respectively. By $N_G(u)$ we denote the set of neighbours of a vertex $u\in V(G)$ and by $d(u)$ its degree $G$. Let us denote by $d(u,v)$ and $d(uPv)$ the distance between two vertices $u,v$  and the length of a $u,v$-path $P$, respectively. For each integer $n\geq4$, the wheel is defined as $W_n=C_n+K_1$, the join of $C_n$ and $K_1.$
 For simplifying notation, let $[k]$ be the set $\lbrace 1,2,\ldots,k\rbrace$ for some positive integer $k$.

Let $c: E(G) \rightarrow [k]$ be an edge-colouring of $G.$ If adjacent edges of $G$ receive different colours by $c$, then $c$ is a \emph{proper colouring}. The smallest number of colours needed in a proper colouring of $G$, denoted by $\chi'(G)$, is called the \emph{chromatic index} of $G$. Vizing et al. \cite{Vizing1964} proved that for any graph $G$, $\chi'(G)$ is either its maximum degree $\Delta(G)$ or $\Delta(G)+1$. If $\chi'(G)=\Delta(G)$, then $G$ is in \emph{class 1}. Otherwise, $G$ is in \emph{class 2}.

A path $P$ in an edge-coloured graph $G$ is called a \emph{rainbow path} if its edges have different colours. An edge-coloured graph $G$ is \emph{rainbow connected} if every two vertices are connected by at least one rainbow path in $G$. For a connected graph $G$, the \emph{rainbow connection number} of $G$, denoted by $rc(G)$, is defined as the smallest number of colours required to make it rainbow connected. The concept of rainbow connection was first introduced by Chartrand et al. \cite{Chartrand2008} and well-studied since then. Readers who are interested in this topic are referred to \cite{LiSh2013,Li2012}.

As an extension of proper colouring and motivated by rainbow connections of graphs, Bau et al. \cite{BJJKM2018} introduced the concept of proper rainbow connections in connected graphs. Let $G$ be a nontrivial connected graph. The proper edge-coloured graph $G$ is said to be \emph{properly rainbow connected} if any two vertices $u,v\in V(G)$ are connected by a rainbow path. The \emph{proper rainbow connection number} $prc(G)$ of a connected graph $G$ is the smallest number of colours needed to colour $G$ properly rainbow connected.

By the definition above, if an edge-coloured graph $G$ is properly rainbow connected, then $G$ is properly coloured and rainbow connected. Hence, the lower bound of the proper rainbow connection number was obtained by the following proposition.

\begin{proposition}[Bau et al. \cite{BJJKM2018}]
\label{Prop_Lower_Bau}
Let $G$ be a connected graph. Then $$diam(G)\leq rc(G)\leq prc(G)$$
and
$$\chi'(G)\leq prc(G).$$
\end{proposition}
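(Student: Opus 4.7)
The plan is to derive each of the three inequalities directly from the definitions; no step requires any construction or case analysis, so the whole argument should be a short sequence of observations.

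First I would fix a proper rainbow colouring of $G$ using $prc(G)$ colours, which by definition exists. Since such a colouring is in particular a rainbow connected colouring, $rc(G) \leq prc(G)$; and since it is also a proper edge-colouring, $\chi'(G) \leq prc(G)$. These two inequalities are immediate.

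For the remaining bound $diam(G) \leq rc(G)$, I would take any rainbow connected edge-colouring of $G$ with $rc(G)$ colours, pick two vertices $u,v$ realising the diameter, and apply the definition to obtain a rainbow $u,v$-path $P$. Since the edges of $P$ carry pairwise distinct colours, $|E(P)| \leq rc(G)$, and because $d(u,v) \leq |E(P)|$, we conclude $diam(G) = d(u,v) \leq rc(G)$.

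There is no real obstacle in this proof; the statement is essentially a reformulation of the definitions of $rc(G)$, $prc(G)$, and $\chi'(G)$. The only thing worth stating carefully is the observation that any rainbow path has length at most the number of colours used, which is what links $rc(G)$ to $diam(G)$.
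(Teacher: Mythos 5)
Your argument is correct and matches the paper's reasoning: the paper likewise observes that a properly rainbow connected colouring is simultaneously a proper colouring and a rainbow connected colouring (giving $\chi'(G)\leq prc(G)$ and $rc(G)\leq prc(G)$), and the bound $diam(G)\leq rc(G)$ follows, exactly as you note, because a rainbow path between two diameter-realising vertices has pairwise distinctly coloured edges and hence length at most the number of colours used.
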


On the other hand, if every edge of $G$ receives a distinct colour from $[m]$, where $m$ is the number edges of $G$, then $G$ is properly rainbow connected. By using Proposition \ref{Prop_Lower_Bau} and Vizing's Theorem in \cite{Vizing1964}, the proper rainbow connection number of an arbitrary connected graph is bounded as follows.
\begin{corollary}
\label{Cor_Bounded}
Let $G$ be a connected graph of size $m$. Then $$\max\lbrace rc(G),\chi'(G)\rbrace\leq prc(G)\leq m.$$
\end{corollary}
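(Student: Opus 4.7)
The lower bound requires no additional work: Proposition \ref{Prop_Lower_Bau} already asserts both $rc(G)\leq prc(G)$ and $\chi'(G)\leq prc(G)$, and taking the maximum of the two left-hand sides yields exactly $\max\{rc(G),\chi'(G)\}\leq prc(G)$. So my plan for the first half is simply to invoke Proposition \ref{Prop_Lower_Bau}.

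For the upper bound $prc(G)\leq m$, my plan is to exhibit an explicit witnessing edge-colouring. I would assign to each edge of $G$ its own distinct colour from $[m]$, and then verify the two defining properties of a proper rainbow colouring. Properness is automatic: any two adjacent edges get different colours simply because \emph{no} two edges share a colour. Proper rainbow connection is also automatic: since all edge colours are pairwise distinct, every path in $G$ is rainbow, and the connectivity of $G$ guarantees that each pair of vertices is joined by at least one such path. Hence this colouring uses exactly $m$ colours, which gives $prc(G)\leq m$. There is no genuine obstacle here — the corollary is a direct combination of Proposition \ref{Prop_Lower_Bau} with the trivial "all edges different" colouring — and Vizing's theorem is cited in the preceding paragraph only to remind the reader that $\chi'(G)$ is finite and at most $\Delta(G)+1\leq m$, so that the lower bound on $prc(G)$ makes sense.
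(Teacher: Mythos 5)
Your proposal is correct and matches the paper's own reasoning exactly: the lower bound is just Proposition \ref{Prop_Lower_Bau}, and the upper bound comes from giving every edge its own colour, which is trivially proper and makes every path rainbow. Nothing further is needed.
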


The authors in \cite{JLLM2019} determined some graphs with large proper rainbow connection number. First of all, they characterized all graphs whose proper connection numbers equal their size.

\begin{theorem}[Jiang et al. \cite{JLLM2019}]
\label{Thm_Jiang_prc_size}
Let $G$ be a connected graph of size $m$. Then $prc(G)=m$ if and only if $G$ is a tree or $K_3$.
\end{theorem}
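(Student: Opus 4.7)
The plan is to prove the two implications separately. For the easy direction, assume $G$ is a tree or $G = K_3$. If $G$ is a tree with $m$ edges, then any two distinct edges of $G$ lie on a common path (obtained by joining two suitably chosen leaves of $G$), so in any properly rainbow connected coloring of $G$ they must receive distinct colors. Hence all $m$ edges get pairwise distinct colors and $prc(G) \geq m$, which together with the upper bound of Corollary~\ref{Cor_Bounded} yields $prc(G) = m$. If $G = K_3$, then $m = 3$, and since $K_3$ is an odd cycle it belongs to class~$2$, so $\chi'(K_3) = 3$; by Proposition~\ref{Prop_Lower_Bau}, $prc(K_3) \geq \chi'(K_3) = 3 = m$, and Corollary~\ref{Cor_Bounded} again gives equality.

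For the nontrivial direction I would prove the contrapositive: if $G$ is a connected graph that is neither a tree nor $K_3$, then $prc(G) \leq m - 1$. The strategy is to exhibit an explicit proper rainbow coloring using only $m - 1$ colors. The key step is to find two edges $e_1, e_2 \in E(G)$ satisfying (i) $e_1$ lies on some cycle of $G$, and (ii) $e_1$ and $e_2$ are non-adjacent. Given such a pair, I would set $c(e_1) = c(e_2) = 1$ and assign distinct colors from $\{2, 3, \dots, m-1\}$ to the remaining $m - 2$ edges. This coloring is proper because the only repeated color appears on the non-adjacent pair $\{e_1, e_2\}$. For rainbow connectivity, given any two vertices $u, v \in V(G)$: since (i) implies that $G - e_1$ is connected, there is a $u,v$-path $P$ in $G$ avoiding $e_1$; such a $P$ uses color $1$ at most once (possibly at $e_2$) and has all other colors distinct, so $P$ is a rainbow $u,v$-path.

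The main obstacle is the existence lemma: under the hypotheses that $G$ is connected, contains a cycle, and $G \neq K_3$, two such edges $e_1, e_2$ always exist. I would prove this by contradiction, assuming that every non-bridge edge of $G$ is adjacent to every other edge of $G$. Fix any cycle $C$ of $G$. If $|C| \geq 4$, then one can choose two non-adjacent edges on $C$, both of which are non-bridges, contradicting the assumption. Hence every cycle of $G$ is a triangle; in particular, $G$ contains a triangle on some vertices $u, v, w$, and the three edges $uv, uw, vw$ are all non-bridges. Applying the assumption to each of them in turn forces every edge of $G$ to be simultaneously incident to $\{u, v\}$, to $\{u, w\}$, and to $\{v, w\}$. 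A short case analysis on the endpoints shows that any edge meeting all three incidence conditions must have both endpoints in $\{u, v, w\}$, so $E(G) \subseteq \{uv, uw, vw\}$. Since $G$ is connected and contains this triangle, this forces $G = K_3$, contradicting the hypothesis, and the lemma follows.
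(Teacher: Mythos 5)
Your argument is correct and complete. Note that the paper does not prove this statement at all: it is quoted as Theorem~\ref{Thm_Jiang_prc_size} from Jiang et al.\ \cite{JLLM2019}, so there is no in-paper proof to compare against; what you have written is a self-contained replacement for the citation. Both directions check out. For trees, any two edges lie on the unique path between suitably chosen endpoints, and that path must be rainbow, forcing $m$ distinct colours; for $K_3$ the chromatic-index lower bound of Proposition~\ref{Prop_Lower_Bau} gives $prc(K_3)\geq 3=m$. For the converse, your existence lemma (a non-bridge $e_1$ and an edge $e_2$ non-adjacent to it, whenever $G$ is connected, contains a cycle, and is not $K_3$) is sound: a cycle of length at least $4$ supplies the pair directly, and otherwise the triangle case analysis correctly forces $G=K_3$. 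The colouring $c(e_1)=c(e_2)=1$ with the remaining $m-2$ edges rainbow is proper, and since $e_1$ is not a bridge every pair of vertices is joined by a path in $G-e_1$, which repeats no colour. The only stylistic quibble is that the phrase about ``joining two suitably chosen leaves'' is an unnecessary detour --- the unique path between the far endpoints of the two edges already does the job --- but this does not affect correctness.
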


After that, they also classified connected graphs whose proper connection numbers are close to the maximum possible value. Let $\mathcal{H'}$ and $\mathcal{H''}$ be two graph classes as shown in Figure \ref{Fig_Jiang_1}, where the order of $H'\in\mathcal{H'}$ is at least $4$ and the order of $H''\in\mathcal{H''}$ is at least $5$, respectively.

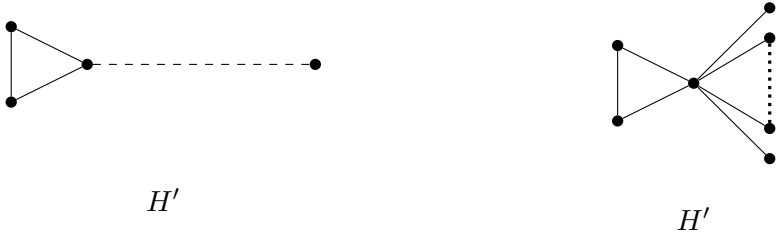
\begin{figure}
\begin{minipage}{0.45\textwidth}
\centering
\begin{tikzpicture}
\node [bnode] (v1) at (0,0) {};
\node [bnode] (v2) at (-1,0.5) {};
\node [bnode] (v3) at (-1,-0.5) {};
\node [bnode] (v4) at (3,0) {};
\draw (v1)--(v2)--(v3)--(v1);
\draw [dashed] (v1)--(v4);
\node [below] at (1,-1.5) {\small $H'$};
\end{tikzpicture}
\end{minipage}
\begin{minipage}{0.45\textwidth}
\centering
\begin{tikzpicture}
\node [bnode] (v1) at (0,0) {};
\node [bnode] (v2) at (-1,0.5) {};
\node [bnode] (v3) at (-1,-0.5) {};
\node [bnode] (v6) at (1,1) {};
\node [bnode] (v7) at (1,-1) {};
\node [bnode] (v4) at (1,0.6) {};
\node [bnode] (v5) at (1,-0.6) {};
\node [below] at (0,-1.5) {\small $H'$};
\draw (v1)--(v2)--(v3)--(v1);
\draw (v1)--(v4);
\draw (v1)--(v5);
\draw (v1)--(v7);
\draw (v1)--(v6);
\draw [very thick, dotted] (v4)--(v5);
\end{tikzpicture}
\end{minipage}
\caption{The graphs $H'\in\mathcal{H'}$ and $H''\in\mathcal{H''}$.}
\label{Fig_Jiang_1}
\end{figure}

\begin{theorem}[Jiang et al. \cite{JLLM2019}]
\label{Thm_Jiang_prc_close_size}
If $G$ is a connected graph of size $m$, then $prc(G)=m-1$ if and only if $G\in\mathcal{H'}$ or $G\in\mathcal{H''}$.
\end{theorem}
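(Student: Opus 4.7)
The plan is to prove both directions of the equivalence.

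For the \emph{sufficiency} direction, consider first $G \in \mathcal{H}'$ with triangle $v_1v_2v_3$ and pendant path $v_1 u_1 u_2 \cdots u_k$, so that $m = k+3$. The upper bound $prc(G) \leq m-1$ is achieved by an explicit colouring: assign the colours $1,2$ to $v_1v_2, v_1v_3$, the colours $3,4,\ldots, k+2$ consecutively to the path edges $v_1u_1, u_1u_2, \ldots, u_{k-1}u_k$, and re-use colour $3$ on $v_2v_3$. Verifying that this colouring is proper and rainbow connected is routine. For the matching lower bound, a short case analysis on the rainbow paths from $u_k$ to $v_2$ and from $u_k$ to $v_3$ --- both of which are essentially forced to run through the entire pendant path --- shows that the $k+2$ edges $v_1v_2, v_1v_3, v_1u_1, \ldots, u_{k-1}u_k$ must collectively receive at least $k+2 = m-1$ distinct colours. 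The case $G \in \mathcal{H}''$ is analogous and in fact simpler: since the apex vertex has degree $m-1$, Proposition \ref{Prop_Lower_Bau} already yields $prc(G) \geq \chi'(G) \geq m-1$, while distributing $m-1$ distinct colours among the edges at the apex and repeating one of them on the remaining triangle edge attains the bound.

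For the \emph{necessity} direction, I would argue the contrapositive: if $G$ is connected, is neither a tree nor $K_3$, and does not belong to $\mathcal{H}' \cup \mathcal{H}''$, then $prc(G) \leq m-2$. Theorem \ref{Thm_Jiang_prc_size} disposes of trees and $K_3$, so one may assume $G$ contains a cycle. The strategy is to exhibit a proper rainbow colouring that re-uses two colours. I would split into cases by girth: if $G$ has a cycle $C$ of length at least $4$, then $C$ contains two independent edges $e_1, e_2$, and one can first colour $G - \{e_1, e_2\}$ with all distinct colours --- ensuring rainbow paths that bypass $e_1, e_2$ --- and then re-use two existing colours on $e_1, e_2$, the length of $C$ giving enough slack to keep the colouring proper. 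If instead every cycle of $G$ is a triangle, then $G$ is obtained by attaching triangles to a tree; a structural analysis exploiting the hypothesis $G \notin \mathcal{H}' \cup \mathcal{H}''$ locates a pair of edges whose colours can safely be repeated.

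The most delicate step will be the triangle-only case: one must verify that every deviation from the two forbidden families --- for instance an additional triangle, a branching vertex in the pendant tree, or a pendant edge attached away from the triangle's apex --- really supplies the flexibility to reuse a second colour without breaking any required rainbow path. Managing these sub-cases uniformly, and confirming that no further small configurations escape the characterization, will be the main obstacle. Once this is settled, combining the two directions with Theorem \ref{Thm_Jiang_prc_size} yields the claim.
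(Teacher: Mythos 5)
First, note that the paper does not prove this statement at all: Theorem \ref{Thm_Jiang_prc_close_size} is quoted from Jiang et al.\ \cite{JLLM2019} as a known result, so there is no in-paper proof to compare your argument against. Judged on its own, your sufficiency direction is essentially sound. For $\mathcal{H}'$ the explicit colouring works, and the lower bound goes through, though not quite as you state it: it is \emph{not} true that the $k+2$ edges you list must all receive distinct colours (e.g.\ $c(v_1v_2)$ may legally coincide with a colour on the pendant path provided $v_1v_3$ and $v_2v_3$ both get fresh colours); the correct accounting is that at most one triangle edge can reuse a path colour, since if two of them do, one checks that no rainbow path from the path's endpoint to $v_2$ (or $v_3$) survives. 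For $\mathcal{H}''$ the degree argument $prc \geq \chi' \geq \Delta = m-1$ is clean and correct.

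The necessity direction, however, contains a genuine gap: it is a plan rather than a proof, and the one concrete recipe you do give fails on small cases. Take $G=C_4=x_1x_2x_3x_4$ with $e_1=x_1x_2$, $e_2=x_3x_4$. Colouring $G-\{e_1,e_2\}$ with distinct colours gives $c(x_2x_3)=1$, $c(x_4x_1)=2$; then $e_1$ is adjacent to both remaining edges and so, by properness, cannot reuse either existing colour. The conclusion $prc(C_4)=2=m-2$ is still true, but via a colouring of a different shape, so "colour the rest rainbow, then recycle two colours on two independent cycle edges" is not a uniform argument even in the long-cycle case; you also need $G-\{e_1,e_2\}$ connected (or a separate analysis when it is not) for the "bypass" paths to exist. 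More seriously, the triangle-only case --- the heart of the characterization, where one must show that every block-decomposable graph outside $\mathcal{H}'\cup\mathcal{H}''$ (two triangles at a vertex, a branch vertex on the pendant tree, a pendant edge off the apex, etc.) admits a proper rainbow colouring with two repeated colours --- is explicitly deferred in your write-up. Until those sub-cases are actually carried out, the "only if" half of the theorem is not established.
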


Next, the proper rainbow connection numbers of special graphs were considered by Bau et al. \cite{BJJKM2018} and Jiang et al. \cite{JLLM2019}.

\begin{theorem}[Bau et al. \cite{BJJKM2018}]
\label{Thm_Bau_Compl}
For each integer $n \geq 2$,
$$prc(K_n)=\chi'(K_n)=
\begin{cases}
n-1, &\mbox{if } n \mbox{ is even} \\
n, &\mbox{if } n \mbox{ is odd}
\end{cases}
$$
\end{theorem}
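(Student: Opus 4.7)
The plan is to reduce the statement to two standard facts: the lower bound from Proposition \ref{Prop_Lower_Bau} and the classical evaluation of $\chi'(K_n)$. The main observation I would exploit is that $K_n$ has diameter $1$, so rainbow connectivity becomes essentially trivial once one has a proper coloring.

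First I would record the lower bound $\chi'(K_n) \le prc(K_n)$, which is immediate from Proposition \ref{Prop_Lower_Bau}. For the matching upper bound, I would argue as follows: let $c$ be any proper edge-colouring of $K_n$ using exactly $\chi'(K_n)$ colours. For any two distinct vertices $u,v \in V(K_n)$ the edge $uv$ exists and, viewed as a path of length one, is automatically rainbow. Hence $c$ already witnesses that $K_n$ is properly rainbow connected, giving $prc(K_n) \le \chi'(K_n)$. Combining the two inequalities yields $prc(K_n)=\chi'(K_n)$.

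It then remains to quote the classical fact that $\chi'(K_n)=n-1$ when $n$ is even (exhibited by a $1$-factorisation, e.g.\ the standard round-robin tournament construction) and $\chi'(K_n)=n$ when $n$ is odd (since $K_n$ has $n\binom{n}{2}/\lfloor n/2\rfloor$ forcing at least $n$ colours when $n$ is odd, and adding a perfect matching on $K_{n+1}$ restricted to $K_n$ realises the bound). I would insert a short sentence recalling this, without proving it again.

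There is essentially no obstacle: the only place where something could go wrong is if one tried to find a rainbow path using more than one edge, but the completeness of $K_n$ collapses that issue entirely. The argument therefore reduces cleanly to the chromatic index computation, and the theorem follows in a few lines.
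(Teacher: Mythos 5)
Your proposal is correct. The paper itself gives no proof of this statement (it is quoted from Bau et al.\ \cite{BJJKM2018}), but your argument is exactly the degenerate diameter-one case of the paper's own Proposition \ref{Propo_diam}: since every pair of vertices of $K_n$ is joined by an edge, any proper edge-colouring is already properly rainbow connecting, so $prc(K_n)=\chi'(K_n)$ follows from the lower bound in Proposition \ref{Prop_Lower_Bau}, and the case distinction is just the classical value of $\chi'(K_n)$. One small blemish: the parenthetical counting argument for odd $n$ is garbled as written --- it should read that $K_n$ has $\binom{n}{2}$ edges while each colour class is a matching of size at most $\lfloor n/2\rfloor=(n-1)/2$, forcing at least $\binom{n}{2}/\frac{n-1}{2}=n$ colours; the ``perfect matching on $K_{n+1}$ restricted to $K_n$'' phrasing for the construction is also loose (one takes a proper $n$-colouring of $K_{n+1}$ with $n$ colours and deletes a vertex), but these are standard facts and do not affect the correctness of the reduction.
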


\begin{corollary}[Jiang et al. \cite{JLLM2019}]
\label{Cor_Jiang_Cyc}
For each interger $n\geq4$, $prc(C_n)=\lceil\frac{n}{2}\rceil.$
\end{corollary}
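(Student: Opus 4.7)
The plan is to prove $prc(C_n) \leq \lceil n/2 \rceil$ and $prc(C_n) \geq \lceil n/2 \rceil$ separately, with the upper bound coming from an explicit construction and the lower bound splitting into an immediate case ($n$ even) and a delicate case ($n$ odd).

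For the upper bound I would set $k = \lceil n/2 \rceil$, label the edges of $C_n$ cyclically as $e_1, e_2, \ldots, e_n$, and define $c(e_i) = ((i-1) \bmod k) + 1 \in [k]$. Two routine verifications then remain. First, $c$ is a proper edge-colouring: consecutive edges are assigned colour labels that differ by $1 \pmod k$, and the only potentially problematic pair is the wrap-around $e_n, e_1$, which is fine as long as $n \geq 4$. Second, any two vertices at distance $d \leq \lfloor n/2 \rfloor$ are joined by a shortest path using $d$ consecutive edges in the cyclic order; their colours form $d$ consecutive residues modulo $k$ (with at most one residue skipped if the window crosses the wrap), so with $d \leq k$ these colours are pairwise distinct and the path is rainbow.

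For the lower bound, when $n$ is even we immediately obtain $\lceil n/2 \rceil = \mathrm{diam}(C_n) \leq prc(C_n)$ from Proposition \ref{Prop_Lower_Bau}. When $n$ is odd I would argue by contradiction: assume $C_n$ admits a proper rainbow colouring using only $k' = (n-1)/2$ colours. For each pair $v_j, v_{j+k'}$, the two joining paths have lengths $k'$ and $k'+1$; the longer one cannot be rainbow with only $k'$ colours, so the length-$k'$ path must exhaust the palette $[k']$. Setting $W_j := \{c(e_j), c(e_{j+1}), \ldots, c(e_{j+k'-1})\}$, this forces $W_j = [k']$ for every $j$, and comparing $W_j$ with $W_{j+1}$ gives $c(e_{j+k'}) = c(e_j)$ for all $j \in \mathbb{Z}/n$. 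Hence $c$ is periodic modulo $n$ with period $k'$, and since $\gcd(k', n) = \gcd(k', 2k'+1) = 1$ the colouring must be constant, contradicting properness.

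The main obstacle is this last argument: for odd $n \geq 7$, both the diameter bound and the chromatic-index bound from Proposition \ref{Prop_Lower_Bau} give only $prc(C_n) \geq (n-1)/2$, so one genuinely needs the number-theoretic coincidence $\gcd((n-1)/2, n) = 1$ to raise the bound by one. I would sanity-check the small cases $n = 5, 7, 9$ against the general periodicity deduction before relying on it.
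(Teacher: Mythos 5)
The paper does not prove this statement at all: it is imported as a corollary from Jiang et al.\ \cite{JLLM2019}, so there is no in-paper proof to compare against. Your argument is a correct, self-contained proof. The upper bound via $c(e_i)=((i-1)\bmod \lceil n/2\rceil)+1$ checks out: consecutive edges always receive colours differing by a nonzero residue modulo $k=\lceil n/2\rceil$ once $n\geq 4$ (for odd $n$ the wrap-around difference is $2\bmod k$, which is why $n=3$ must be excluded), and any window of $d\leq\lfloor n/2\rfloor$ consecutive edges draws its colours from at most $d+1\leq k$ consecutive residues, hence is rainbow. The lower bound is where the real content lies, and your handling of odd $n$ is the right idea: with only $k'=(n-1)/2$ colours the longer arc between antipodal-type pairs can never be rainbow, forcing every length-$k'$ window to be a full palette, whence $c(e_{j+k'})=c(e_j)$ for all $j$; since $\gcd(k',2k'+1)=1$ this makes $c$ constant, contradicting properness. (One could alternatively get the odd lower bound by observing that a $k'$-colouring of $C_{2k'+1}$ forces some colour to appear on at least three edges and arguing about the arcs they cut off, but your periodicity argument is cleaner and fully rigorous.) The only cosmetic imprecision is the phrase ``the only potentially problematic pair is the wrap-around'': for odd $n$ there is also the internal reset between $e_k$ and $e_{k+1}$, though it too differs by $1$ modulo $k$ and causes no trouble. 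Nothing needs repair.
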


Chartrand et al. \cite{Chartrand2008} determined the rainbow connection numbers of complete graphs.

\begin{proposition}[Chartrand et al. \cite{Chartrand2008}]
\label{Pro_Chartrand_Compl}
Let $n\geq2$ be an integer. Then $rc(K_n)=1.$
\end{proposition}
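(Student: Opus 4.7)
The statement is essentially immediate from the definition, so the plan has two very short parts: verify the upper bound $rc(K_n)\leq 1$ by exhibiting a colouring, and verify the matching lower bound $rc(K_n)\geq 1$ by noting that $K_n$ has edges for $n\geq 2$.

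For the upper bound, the plan is to assign a single colour, say $1$, to every edge of $K_n$. Since $K_n$ is complete, any two distinct vertices $u,v\in V(K_n)$ are joined by the edge $uv$, which is a $u,v$-path of length $1$. A one-edge path uses only one colour and is therefore trivially rainbow, so this colouring makes $K_n$ rainbow connected, showing $rc(K_n)\leq 1$.

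For the lower bound, I would observe that every edge-colouring of a graph with at least one edge must use at least one colour, and $K_n$ with $n\geq 2$ has $\binom{n}{2}\geq 1$ edges. Combining the two inequalities gives $rc(K_n)=1$.

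Since the argument collapses to a direct application of the definition, there is no real obstacle; the only point worth double-checking is the base case $n=2$, where $K_2$ is a single edge and the single-colour colouring trivially realises $rc(K_2)=1$, matching the formula.
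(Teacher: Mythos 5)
Your proof is correct and is the standard one-line argument: a monochromatic colouring of $K_n$ makes every pair of vertices rainbow connected via the single edge joining them, and at least one colour is needed since $K_n$ has an edge. The paper cites this result from Chartrand et al.\ without proof, and your argument is exactly the intended justification.
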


By Theorem \ref{Thm_Bau_Compl} and Proposition \ref{Pro_Chartrand_Compl}, it can be readily seen that the difference $prc(G)-rc(G)$ can be arbitrarily large.

The \emph{cartesian product} of two graphs $G$ and $H$ written $G\square H$ is the graph with vertex set $V(G)\times V(H)$
specified by putting $(u_1,u_2)$ adjacent to $(v_1,v_2)$ if and only if (1) $u_1=u_2$ and $v_1v_2\in E(H)$, or
(2) $v_1=v_2$ and $u_1u_2\in E(G).$
Bau et al. \cite{BJJKM2018} and Jiang et al. \cite{JLLM2019} determined proper connection numbers of cartesian products by the following results.

\begin{proposition}[Bau et al. \cite{BJJKM2018}]
\label{Pro_Bau_Carte}
Let $n,p_1,\ldots,p_n>1$ be integers and \\$G=K_{p_1}\square\ldots\square K_{p_n}.$ Then
$$\sum_{i=1}^{n}(p_i-1)\leq prc(G)\leq\sum_{i=1}^{n}\chi'(K_{p_i})$$
\end{proposition}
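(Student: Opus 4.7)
The plan is to prove both bounds separately. The lower bound rests on computing $\Delta(G)$: a vertex $(v_1,\ldots,v_n)$ of $G = K_{p_1}\square\cdots\square K_{p_n}$ has exactly $p_i - 1$ neighbours obtained by changing the $i$-th coordinate, so $\Delta(G) = \sum_{i=1}^n (p_i-1)$. Combining the trivial inequality $\Delta(G) \leq \chi'(G)$ with $\chi'(G) \leq prc(G)$ from Proposition~\ref{Prop_Lower_Bau} then yields $\sum_{i=1}^n (p_i-1) \leq prc(G)$ at once.

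For the upper bound, I would construct an explicit proper rainbow-connected edge-colouring of $G$ using disjoint palettes for the $n$ coordinate directions. Fix for each $i$ a proper edge-colouring $c_i$ of $K_{p_i}$ with $\chi'(K_{p_i})$ colours, and set $s_i = \sum_{j < i} \chi'(K_{p_j})$. For each edge of $G$ whose endpoints differ only in coordinate $i$, say from $a$ to $b$ in that slot, assign the colour $c_i(ab) + s_i$; this uses exactly $\sum_{i=1}^n \chi'(K_{p_i})$ colours. The colouring is proper because two adjacent edges either point in the same direction $i$ (their projections meet at a vertex of $K_{p_i}$, so they receive distinct $c_i$-colours) or in different directions (their colours lie in disjoint palette blocks).

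To verify rainbow connectivity, given any two vertices $v = (v_1,\ldots,v_n)$ and $u = (u_1,\ldots,u_n)$, I would exhibit the ``coordinate-sweep'' path that successively replaces $v_i$ by $u_i$ in a single edge, which is possible because each $K_{p_i}$ is complete. Such a path uses at most one edge per direction, so its edges draw colours from pairwise disjoint blocks and the path is automatically rainbow. I do not expect a real obstacle here; the argument is a direct construction, and the only point worth checking carefully is that palette-disjointness handles the cross-direction case of properness and rainbow-ness at the same time, making both verifications essentially immediate.
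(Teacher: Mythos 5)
Your proof is correct. Note that the paper itself offers no proof of this proposition --- it is quoted as a known result of Bau et al.\ \cite{BJJKM2018} --- so there is nothing internal to compare against; your argument is the natural one. Both halves check out: the lower bound follows from $\Delta(G)=\sum_{i=1}^{n}(p_i-1)$ together with $\Delta(G)\leq\chi'(G)\leq prc(G)$ (Proposition~\ref{Prop_Lower_Bau}), and for the upper bound the disjoint-palette product colouring is proper (same-direction neighbours project to adjacent edges of the factor; cross-direction neighbours lie in disjoint blocks) while the coordinate-sweep path uses at most one edge per direction and is therefore automatically rainbow.
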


\begin{theorem}[Jiang et al. \cite{JLLM2019}]
\label{Thm_Jiang_Carte}
Suppose that $n\geq1$, and $p_1,\ldots,p_n>1$ are integers. If $G=K_{p_1}\square\ldots\square K_{p_n}$, then $prc(G)=\chi'(G)$.
\end{theorem}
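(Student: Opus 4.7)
The inequality $\chi'(G)\leq prc(G)$ is immediate from Proposition \ref{Prop_Lower_Bau}, so the heart of the theorem is the matching upper bound $prc(G)\leq \chi'(G)$. The plan is to exhibit a proper edge-colouring of $G$ using exactly $\chi'(G)$ colours under which any two vertices are joined by a rainbow path. The base case $n=1$ is exactly Theorem \ref{Thm_Bau_Compl}, so I will focus on $n\geq 2$.

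First I would pin down $\chi'(G)$ in terms of the parities of the $p_i$. Since a Cartesian product with a class 1 factor is itself of class 1, $G$ is of class 1 as soon as some $p_i$ is even, giving $\chi'(G)=\Delta(G)=\sum_{i=1}^{n}(p_i-1)$; when every $p_i$ is odd, $|V(G)|=\prod_{i=1}^{n} p_i$ is odd while $G$ is $\Delta$-regular with $\Delta$ even, so no $\Delta$-edge-colouring can exist (each colour class would have to be a perfect matching on an odd vertex set) and Vizing's theorem yields $\chi'(G)=\Delta(G)+1$.

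Next I would refine the construction behind Proposition \ref{Pro_Bau_Carte}. That construction gives each coordinate a disjoint palette $C_i$ used via a proper edge-colouring of the $K_{p_i}$-fibres, and already enjoys the rainbow property: for two vertices $u,v$ differing in coordinate set $S$, any shortest path obtained by changing the differing coordinates one at a time draws its successive edge colours from distinct $C_i$. My aim is to reduce the palette total from $\sum_{i=1}^{n}\chi'(K_{p_i})$ down to $\chi'(G)$ by identifying colours across coordinates. Identifying vertices of $G$ with $\mathbb{Z}_{p_1}\times\cdots\times\mathbb{Z}_{p_n}$, I would use the standard near $1$-factorisation $(a,b)\mapsto(a+b)\bmod p_i$ on each $K_{p_i}$-fibre; when $p_i$ is even every vertex uses all $p_i-1$ colours, while when $p_i$ is odd vertex $a$ is missed exactly by the colour $2a\bmod p_i$. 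The ``slack'' of an odd coordinate at each vertex can then be identified with a colour actively used by another coordinate, saving one colour per odd $p_i$. When some $p_i$ is even, its full palette offers enough room to absorb every such identification, producing a proper $\Delta$-edge-colouring; in the all-odd case one extra colour must remain to cover a residual conflict that cannot be resolved by identifications alone.

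The main obstacle is arranging the identifications so that the coordinate-disjoint palette property survives at every vertex simultaneously. Because the slack of an odd coordinate depends only on the corresponding coordinate value, the identifications can be implemented by a modular rule depending on another coordinate, and one must verify that at no vertex a single colour is demanded by two different coordinates. In the all-odd case this verification is tightest and forces the extra $(\Delta+1)$-st colour onto a carefully chosen collision set. Once properness is in place, rainbow connectivity is inherited from the Proposition \ref{Pro_Bau_Carte} construction: the coordinate-disjoint palette property at each intermediate vertex implies that a shortest $u$-$v$ path changing the differing coordinates in any order is automatically rainbow.
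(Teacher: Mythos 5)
This statement is quoted by the paper from Jiang et al.\ \cite{JLLM2019} without proof, so there is no in-paper argument to compare against; your proposal has to stand on its own, and as written it does not. The framing is fine: the lower bound is Proposition \ref{Prop_Lower_Bau}, the determination of $\chi'(G)$ (class 1 iff some $p_i$ is even, via the standard fact that a Cartesian product with a class 1 factor is class 1, and the odd-order regularity obstruction otherwise) is correct, and the colour count $\sum_i\chi'(K_{p_i})=\Delta(G)+\#\{i: p_i \mbox{ odd}\}$ makes the ``save one colour per odd coordinate'' plan numerically plausible. But the entire content of the theorem lies in the step you leave as a description: you never specify the identification rule, and the properness verification is reduced to ``one must verify that at no vertex a single colour is demanded by two different coordinates.'' That is the statement of the obstacle, not its resolution, and in the all-odd case you explicitly concede that an unspecified ``residual conflict'' has to be handled by an unspecified ``carefully chosen collision set.''

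More seriously, the final paragraph contains a claim that is unjustified and, as stated, circular. You argue that rainbow connectivity is ``inherited'' because ``the coordinate-disjoint palette property at each intermediate vertex implies that a shortest $u$-$v$ path changing the differing coordinates in any order is automatically rainbow.'' In the unidentified construction of Proposition \ref{Pro_Bau_Carte} this works because each colour is used \emph{globally} on edges of a single coordinate, so the $|S|$ edges of a shortest path (one per differing coordinate) automatically get distinct colours. Identifying colours across coordinates destroys exactly this global property: after identification a colour $c$ appears on coordinate-$i$ edges somewhere and on coordinate-$j$ edges elsewhere, and a shortest path uses its coordinate-$i$ edge and its coordinate-$j$ edge at \emph{different} vertices, so a local (per-vertex) disjointness condition does not prevent the two from sharing colour $c$. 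To close the gap you would need either a concrete colouring for which you check, for every pair $u,v$, that \emph{some} ordering of the coordinate changes (or some non-shortest detour) yields a rainbow path, or a different structural argument. Until that is supplied, the upper bound $prc(G)\leq\chi'(G)$ — the whole theorem — is not proved.
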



\section{Upper bounds}
In this section we will show improved upper bounds for the proper rainbow connection number of graphs.

The concept of rainbow connection was first introduced by Chartrand et al. \cite{Chartrand2008}. Moreover, they gave the relation between rainbow connection number of a connected graph and rainbow connection number of its spanning tree as follows.

\begin{proposition}[Chartrand et al. \cite{Chartrand2008}]
\label{Prop_Chartrand_rc}
Let $G$ be a connected graph and $T$ be a spanning tree of $G$. Then $rc(G)\leq rc(T)$.
\end{proposition}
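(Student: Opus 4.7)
The plan is to take an optimal rainbow edge-colouring of $T$ and simply extend it to $G$ without introducing new colours, exploiting the fact that a spanning tree already contains a path between every pair of vertices.

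First I would fix a proper rainbow edge-colouring $c_T : E(T) \to [rc(T)]$ witnessing the value $rc(T)$; this exists by definition. Next I would define an edge-colouring $c : E(G) \to [rc(T)]$ of the whole graph $G$ by setting $c(e) = c_T(e)$ whenever $e \in E(T)$, and $c(e) = 1$ (or any fixed colour from $[rc(T)]$) whenever $e \in E(G) \setminus E(T)$. This extension is well defined because $rc(T) \geq 1$ whenever $T$ has at least one edge, and the case where $G$ is a single vertex is trivial.

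Then I would verify that this colouring makes $G$ rainbow connected. Let $u,v \in V(G) = V(T)$ be any two distinct vertices. Since $T$ is a spanning tree, there is a unique $u,v$-path $P$ in $T$, and by the choice of $c_T$ this path is rainbow. But $P$ is also a $u,v$-path in $G$, and its edges are coloured by $c$ exactly as by $c_T$, so $P$ is still rainbow in $G$. Hence every pair of vertices in $G$ is joined by a rainbow path using only $rc(T)$ colours, which gives $rc(G) \leq rc(T)$.

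There is essentially no technical obstacle here; the only thing to be careful about is that the extension must not require more than $rc(T)$ colours, which is why the non-tree edges are recoloured using a colour that is already in use on $T$, and that the rainbow $u,v$-paths in $T$ remain valid $u,v$-paths in the supergraph $G$, which is immediate from $V(G) = V(T)$ and $E(T) \subseteq E(G)$.
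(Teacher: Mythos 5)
Your argument is correct and is exactly the standard proof of this fact (the paper itself only cites Chartrand et al.\ and gives no proof): extend an optimal rainbow-connecting colouring of $T$ to $G$ by reusing an existing colour on the non-tree edges, and observe that the unique tree path between any two vertices is rainbow and survives as a path in $G$. One tiny remark: the definition of $rc$ does not require the witnessing colouring of $T$ to be \emph{proper}, so the word ``proper'' in your first step is unnecessary (and slightly misleading); dropping it changes nothing in the argument.
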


By Proposition \ref{Prop_Chartrand_rc}, it can be readily seen that if $G$ has $n$ vertices, then $rc(G)\leq n-1$. Moreover, by Theorem \ref{Thm_Bau_Compl} and Corollary \ref{Cor_Jiang_Cyc}, proper connection numbers of complete graphs and cycles do not exceed their number of vertices. These facts are our motivation to improve the upper bound for the proper rainbow connection number as follows.

\begin{figure}
\begin{center}
\begin{tikzpicture}
\node [bnode] (w) at (0,0) {};
\node [above] at (0,0) {$w$};
\node [bnode] (w1) at (-2.5,-3) {};
\node [below] at (-2.5,-3) {$w_1$};
\node [bnode] (w2) at (-1,-3) {};
\node [below] at (-1,-3) {$w_2$};
\node [bnode] at (-0.5,-3) {};
\node [bnode] at (0,-3) {};
\node [bnode] at (0.5,-3) {};
\node [bnode] at (1,-3) {};
\node [bnode] (w_max) at (2.5,-3) {};
\node [below] at (2.5,-3) {$w_{\bigtriangleup(G)}$};
\draw (w1)--(w)--(w2);
\draw (w_max)--(w);
\node [left] at (-1.2,-1.5) {$1$};
\node [left] at (-0.4,-1.5) {$2$};
\node [right] at (1.2,-1.5) {$\bigtriangleup(G)$};
\end{tikzpicture}
\caption{Tree $K_{1,\bigtriangleup(G)}$}
\label{Figure_upper_bound}
\end{center}
\end{figure}

\begin{theorem}
\label{Thm_upper_bound}
Let $G$ be a nontrivial, connected graph of order $n$ and maximum degree $\Delta(G)$. Then $$\max\lbrace \Delta(G),diam(G)\rbrace\leq prc(G)\leq \chi'(G)+(n-1-\bigtriangleup(G))=
\begin{cases}
n,\mbox{ if }G\mbox{ is class 2}\\
n-1,\mbox{ if }G\mbox{ is class 1}
\end{cases}
$$
\end{theorem}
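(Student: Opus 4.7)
The lower bound $\max\{\Delta(G), diam(G)\} \leq prc(G)$ follows immediately from Proposition \ref{Prop_Lower_Bau}: it gives $\Delta(G) \leq \chi'(G) \leq prc(G)$ and $diam(G) \leq rc(G) \leq prc(G)$. The real content is the upper bound.

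The plan for the upper bound is to start from a $\chi'(G)$-colour proper edge colouring of $G$ and augment it along a BFS spanning tree using $n - 1 - \Delta(G)$ brand-new colours, one per tree edge not incident to the root. Concretely, I would first fix a vertex $w$ with $d(w) = \Delta(G)$ and neighbours $w_1, \ldots, w_{\Delta(G)}$, and take any proper edge colouring of $G$ with colours $\{1, \ldots, \chi'(G)\}$. Since the $\Delta(G)$ edges at $w$ are pairwise adjacent, they receive pairwise distinct colours, so after permuting the palette I may assume $c(ww_i) = i$ for each $i$. Next I would choose any BFS spanning tree $T$ rooted at $w$; because each $w_i$ is at distance $1$ from $w$, such a tree must contain all of $ww_1, \ldots, ww_{\Delta(G)}$, leaving exactly $n - 1 - \Delta(G)$ further ``deep'' tree edges. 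I recolour each deep tree edge with its own fresh colour chosen from $\{\chi'(G)+1, \ldots, \chi'(G)+n-1-\Delta(G)\}$ and leave every other edge alone.

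To verify properness, observe that the only edges whose colours changed are the deep tree edges, and the new palette is disjoint from $\{1, \ldots, \chi'(G)\}$ with no internal repetitions; this means no conflict is introduced, and the untouched edges remain properly coloured as before. For rainbow connectedness I would, given $u, v \in V(G)$, trace the unique $uv$-path $P$ in $T$ and let $z$ be their lowest common ancestor in $T$. Every edge of $P$ not incident to $w$ is a deep tree edge with its own unique new colour, so if $z \neq w$ then $P$ avoids $w$ entirely and is rainbow outright; and if $z = w$ then $P$ uses precisely two edges at $w$, namely $ww_i$ and $ww_j$ with $i \neq j$ (as $u, v$ lie in different branches from $w$), whose colours $i, j$ are distinct from each other and from every new colour on $P$. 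Either way $P$ is rainbow, giving $prc(G) \leq \chi'(G) + (n - 1 - \Delta(G))$; Vizing's theorem then yields the two case values $n-1$ and $n$ accordingly.

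The one point demanding a little care is the properness check at each $w_i$, where a deep tree edge, the star edge $ww_i$ of colour $i$, and possibly a non-tree edge at $w_i$ all meet simultaneously; this is precisely why the new colours must be kept disjoint from the original palette rather than recycled from $\{1, \ldots, \chi'(G)\}$ to save a colour. Everything else reduces to bookkeeping.
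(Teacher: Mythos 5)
Your proposal is correct and follows essentially the same route as the paper: fix a maximum-degree vertex $w$, start from a proper $\chi'(G)$-colouring in which the star at $w$ uses colours $1,\ldots,\Delta(G)$, extend the star to a spanning tree, and recolour the remaining $n-1-\Delta(G)$ tree edges with fresh singleton colours so that the spanning tree is rainbow and the colouring stays proper. Your BFS/lowest-common-ancestor case analysis is a slightly more explicit (and unnecessary, since the whole spanning tree is rainbow) version of the paper's one-line observation that $T'$ is rainbow coloured.
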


\begin{proof}
\label{proof_thm_upper_bound}
Since $\Delta(G)\leq\chi'(G)$ by Vizing's Theorem in \cite{Vizing1964} and $diam(G)\leq rc(G)$ by Chartrand et al. \cite{Chartrand2008}, the lower bound is easily obtained.

Next, we consider the upper bound. Since $G$ has maximum degree $\bigtriangleup(G)$, there exists a vertex $w\in V(G)$ such that $d_G(w)=\bigtriangleup(G)$. Let $N_G(w)=\lbrace w_1,w_2,...,w_{\bigtriangleup(G)}\rbrace$ be the neighbour set of $w\in V(G)$. We construct a tree $T \cong K_{1,\bigtriangleup(G)}$, which consists of the vertex $w$ as a root of the tree $K_{1,\bigtriangleup(G)}$ and all the vertices in the set $N_G(w)$. Let $c$ be a proper edge-colouring of $G$ with $\chi'(G)$ colours $\{1, 2, \ldots, \chi'(G)\}$. We may assume that the edges of $T$ have colours $1, 2, \ldots, \Delta(G)$. Since $G$ is connected, we can extend the tree $T$ to a spanning tree $T'$ of $G$ by properly adding $n-1-\Delta(G)$ edges. Now we recolour these $n-1-\Delta(G)$ edges by using $n-1-\Delta(G)$ new colours. This leads to an proper edge-colouring $c'$ of $G,$ since every new colour is used exactly once. Moreover, the tree $T'$ is rainbow coloured, which shows that $G$ is properly rainbow-connected.

By using Vizing's Theorem in \cite{Vizing1964}, $prc(G)\leq n$, if $G$ is class $2$ or $prc(G)\leq n-1$, if $G$ is class $1$. We obtain the result.
\end{proof}

Now, we improve the upper bound by requiring a structural condition.

\begin{proposition}
\label{Propo_improve_upper_bound}
Let $G$ be a connected graph with maximum degree $\Delta(G)\leq n-2$ and $w\in V(G)$ such that $d(w)=\Delta(G)$. If there is a Hamiltonian cycle in $G-N[w]$, then $prc(G)\leq\frac{n+\Delta(G)}{2}+1$.
\end{proposition}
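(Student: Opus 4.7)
The plan is to adapt the proof of Theorem~\ref{Thm_upper_bound} by replacing its spanning-tree backbone with a spanning subgraph consisting of the star at $w$, the Hamiltonian cycle $C$ in $G-N[w]$, and one bridge edge between $N(w)$ and $V(C)$. Writing $k=n-\Delta(G)-1$, I first apply Vizing's theorem to produce a proper edge-colouring of $G$ with at most $\Delta(G)+1$ colours, then permute colour names so that the star edges $ww_1,\ldots,ww_{\Delta(G)}$ receive the distinct colours $1,\ldots,\Delta(G)$. I then recolour the $k$ edges of $C$ with $\lceil k/2\rceil$ fresh colours, following the proper rainbow-connected colouring of a cycle supplied by Corollary~\ref{Cor_Jiang_Cyc}; since these colours appear on no other edges, properness is preserved, and the overall number of colours used is at most $(\Delta(G)+1)+\lceil k/2\rceil=\lfloor(n+\Delta(G))/2\rfloor+1$, matching the claimed bound.

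Rainbow-connectedness is then verified by a case analysis. Pairs inside $N[w]$ are joined by the star (either an edge $ww_i$ or the length-two path $w_iww_j$). Pairs inside $V(C)$ are joined by rainbow paths in $C$ furnished by the cycle colouring. For a pair $u\in N[w]$ and $v\in V(C)$, choose a bridge $w_jv_0$ between $N(w)$ and $V(C)$, which exists by the connectivity of $G$. If $u=w_i$ already has a neighbour $v'\in V(C)$, the direct path $w_i,v',\ldots,v$ through the cycle is rainbow because the Vizing colour of $w_iv'$ lies in $\{1,\ldots,\Delta(G)+1\}$ and is therefore disjoint from the fresh cycle colours; analogous direct routes $w,w_j,v_0,\ldots,v$ and $w_j,v_0,\ldots,v$ handle $u=w$ and $u=w_j$.

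The main obstacle is the remaining corner case in which $w_i\in N(w)$ has no neighbour in $V(C)$: every rainbow path from $w_i$ to $v\in V(C)$ must then traverse $w$, giving the colour sequence $i,j,c(w_jv_0),$ followed by the fresh cycle colours, which is rainbow only when $c(w_jv_0)\ne i$. I plan to handle this by observing that if every bridge in the Vizing colouring were coloured $i$, then a Kempe-chain swap between the colours $i$ and $\Delta(G)+1$ yields a proper recolouring in which at least one bridge receives a colour other than $i$, and that this swap leaves the fresh cycle colours untouched since those colours are disjoint from $\{1,\ldots,\Delta(G)+1\}$. Executing the Kempe-chain adjustment while preserving the star normalisation $c(ww_i)=i$ is the main technical effort.
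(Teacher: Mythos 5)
Your construction is the same as the paper's: take a proper edge-colouring of $E(G)\setminus E(C)$ with at most $\Delta(G)+1$ colours, give the Hamiltonian cycle $C$ of $G-N[w]$ a set of $\lceil|V(C)|/2\rceil$ fresh colours as in Corollary~\ref{Cor_Jiang_Cyc}, and count. The paper stops there with ``it can be readily seen that $G$ is properly rainbow connected''; you are right to distrust that sentence, and the corner case you isolate is a genuine obstruction to the construction as literally stated. Concretely, let $w$ have neighbours $w_1,\dots,w_4$ with $w_2,w_3,w_4$ pendant, let $w_1v_1$ be the unique edge to a $6$-cycle $C=v_1v_2\cdots v_6v_1$. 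Here $\chi'(G)=\Delta(G)=4$, and in any proper colouring of $E(G)\setminus E(C)$ with $4$ colours normalised so that $c(ww_\ell)=\ell$, we get $c(w_1v_1)=i$ for some $i\in\{2,3,4\}$; every path from $w_i$ to $C$ contains both $ww_i$ and $w_1v_1$, so $w_i$ is not rainbow connected to $C$. So the base colouring must be chosen with care, and your instinct to exploit the unused colour $\Delta(G)+1$ is the right (and apparently necessary) one. You also correctly observe, implicitly, that only one colour $i$ can be problematic at a time: a corner-case $w_i$ can route through \emph{every} bridge via $w$, so it is blocked only if all bridges carry colour $i$.

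The gap is that the repair is announced rather than carried out, and as sketched it does not yet close. The Kempe $\{i,\Delta(G)+1\}$-chain through a bridge $w_jv_0$ of colour $i$ may itself contain the star edge $ww_i$ (for instance when $c(w_iw_j)=\Delta(G)+1$, the chain runs $w,w_i,w_j,v_0,\dots$); after the swap the intended route $w_i,w,w_j,v_0$ carries colours $\Delta(G)+1,\,j,\,\Delta(G)+1$ and is not rainbow. One must then argue that an alternative rainbow route survives (another bridge still coloured $i$, or the recoloured chain itself furnishing a short $w_i$--$C$ path), and this is exactly what you defer as ``the main technical effort''. Until that case is settled --- and the normalisation $c(ww_\ell)=\ell$ is either preserved or shown to be dispensable --- the proof is incomplete. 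A smaller shared defect: both you and the paper apply the cycle colouring of Corollary~\ref{Cor_Jiang_Cyc} to $C$, which requires $|V(C)|\geq 4$; when $|V(C)|=3$ a triangle cannot even be properly coloured with $\lceil 3/2\rceil=2$ colours, so that boundary case needs a separate sentence.
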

\begin{proof}
Suppose that $G$ has a vertex $w$ with $d(w)=\Delta(G)$ and there is a Hamiltonian cycle, say $C$, in $G-N[w]$. Let us colour all the edges of $E(G)\setminus E(C)$ by $c:1\rightarrow[\chi'(G)]$ in order to make it a proper colouring. Next, we continue to colour all remaining edges of $C$ with $\lceil\frac{\vert V(C)\vert}{2}\rceil$ new colours. It can be readily seen that $G$ is properly rainbow connected using $\chi'(G)+\bigl\lceil\frac{\vert V(C)\vert}{2}\bigr\rceil$ colours.
Since $\Delta(G)\leq n-2$, it can be readily seen that $$prc(G)\leq\Delta(G)+1+\bigl\lceil\frac{n-1-\Delta(G)}{2}\bigr\rceil\leq\frac{n+\Delta(G)}{2}+1.$$

The result is obtained.
\end{proof}

\section{Estimating the difference $prc(G) - rc(G)$}

Next observe that

$$0 \leq prc(G) - rc(G) \leq n-1$$

for all connected graphs $G,$ where the upper bound is attained for $K_n$ if $n$ is odd. We now extend this observation as follows.

\begin{proposition}
Let $G$ be a connected graph of order $n \geq 2$ with clique number $\omega(G) \geq \frac{n+1}{2}.$ Then
$$prc(G) - rc(G) \geq 2\omega(G)-n-1.$$
\end{proposition}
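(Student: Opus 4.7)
The plan is to derive $prc(G) - rc(G) \geq 2\omega(G) - n - 1$ by proving two independent clique-based bounds and subtracting them: an upper bound $rc(G) \leq n - \omega(G) + 1$ together with a lower bound $prc(G) \geq \omega(G)$, whose difference is exactly $2\omega(G) - n - 1$. Note that the hypothesis $\omega \geq (n+1)/2$ is only there to make the right-hand side non-negative; without it the inequality is vacuous.

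For the upper bound on $rc(G)$, I would use a contraction argument. Let $C$ be a maximum clique of $G$ (so $|C|=\omega$) and form $G'$ by contracting $C$ to a single super-vertex $u^*$; then $G'$ is connected on $n-\omega+1$ vertices, so it admits a spanning tree $T'$ with $n-\omega$ edges. Uncontracting $T'$ — for each edge of $T'$ incident to $u^*$, picking one underlying edge of $G$ between the associated external vertex and a chosen entry vertex of $C$ — yields a subgraph $T''$ of $G$ with $n-\omega$ edges, none of which lies inside $K_\omega$. I would then colour the edges of $T''$ with pairwise distinct colours $1, \ldots, n-\omega$, give every edge inside $C$ the new colour $n-\omega+1$, and colour the remaining edges of $G$ arbitrarily among these colours. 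A case analysis on the location of the endpoints shows that a rainbow path always exists: two clique vertices use a direct edge of colour $n-\omega+1$; two external vertices follow the unique $T'$-path, short-cutting the super-vertex $u^*$ by a single clique edge whenever that super-vertex is traversed; a mixed pair prepends at most one clique edge to the appropriate tree path. Because $T'$ is a tree, $u^*$ is visited at most once per path, so the clique colour appears at most once.

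For the lower bound on $prc(G)$, I would combine Proposition \ref{Prop_Lower_Bau} ($prc(G) \geq \chi'(G)$) with the observation that $K_\omega \subseteq G$ forces $\chi'(G) \geq \chi'(K_\omega)$. When $\omega$ is odd, Theorem \ref{Thm_Bau_Compl} gives $\chi'(K_\omega) = \omega$ directly. When $\omega$ is even and $\omega < n$, connectedness of $G$ forces some vertex of $C$ to have a neighbour outside $C$, yielding $\Delta(G) \geq \omega$ and therefore $\chi'(G) \geq \omega$ via Vizing's theorem. In both cases $prc(G) \geq \omega$.

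Subtracting the two bounds yields the claim $prc(G) - rc(G) \geq \omega - (n-\omega+1) = 2\omega - n - 1$. The main obstacle is the first step: one must verify carefully that the reconstructed walk in $G$ corresponding to a $T'$-path through $u^*$ can always be turned into a rainbow path by inserting at most one clique edge, and that the clique colour is not reused elsewhere along the path. A smaller but genuine subtlety is the borderline case $G = K_n$ with $n$ even, where $\omega = n$ and $prc(G) = n - 1 < \omega$; the $prc(G) \geq \omega$ step fails there, so the stated inequality is off by one for that single graph and should be read with the implicit understanding that this degenerate case is excluded.
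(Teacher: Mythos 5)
Your proof matches the paper's argument essentially verbatim: both establish $rc(G)\leq n+1-\omega(G)$ by colouring a maximum clique monochromatically and extending it by $n-\omega(G)$ rainbow edges to a spanning connected subgraph, and both get $prc(G)\geq\omega(G)$ from $\Delta(G)\geq\omega(G)$ when $\omega(G)\leq n-1$. Your remark about $K_n$ with $n$ even is a genuine catch: the paper simply asserts the inequality ``is true'' when $\omega(G)=n$, but there $prc(G)-rc(G)=(n-1)-1=n-2<n-1=2\omega(G)-n-1$, so that degenerate case actually violates the stated bound and ought to be excluded or the bound weakened by one.
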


\begin{proof}
If $\omega(G)=n,$ then the inequality is true. Hence we may assume that $\frac{n+1}{2} \leq \omega(G) \leq n-1.$
First observe that $rc(G) \leq n+1-\omega(G).$ To see this, take a clique of size $\omega(G)$ and colour all edges between its vertices by one colour. Next we add
$n-\omega(G)$ edges to obtain a spanning subgraph of $G.$ We colour each edge by a new colour and can colour all remaining edges arbitrarily. Then this colouring makes
$G$ rainbow connected. Since $G$ is connected and $\omega(G) \leq n-1,$ we deduce that $\Delta(G) \geq \omega(G).$ Hence by Theorem 2.2 we obtain $prc(G) \geq \omega(G).$
Now the inequality follows.
\end{proof}

Next we analyse the values of $prc(G)$ and $rc(G)$ for graphs with respect to their minimum degree.

\begin{theorem}
Let $G$ be a connected graph of order $n \geq 3$ and minimum degree $\delta = \delta(G). If$
\begin{enumerate}
\item $\delta \geq 3,$ then $rc(G) \leq \frac{3n}{4}$ \cite{S2009},
\item $\delta \geq 4,$ then $rc(G) \leq \frac{3n}{\delta + 1}+3$ \cite{CDRV2012}.
\end{enumerate}
\end{theorem}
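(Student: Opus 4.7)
These are two classical results cited from \cite{S2009} and \cite{CDRV2012}, so a self-contained proof would essentially reproduce the arguments of those papers. The plan in both cases follows the same high-level strategy: construct a small connected subgraph $H$ that ``covers'' the graph in a strong enough sense that a rainbow coloring of $H$, together with a few extra colors assigned to carefully chosen additional edges, already guarantees a rainbow $u,v$-path for every pair $u,v$. The main obstacle in both parts is showing that such a covering subgraph of the required size exists, so I would isolate that as a separate extremal lemma before turning to the coloring step.

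For part (1), I would first establish the extremal lemma that every connected graph of order $n$ with $\delta(G) \geq 3$ contains a connected two-step dominating set $D$ with $|D| \leq 3n/4 - 1$. The standard route is to start from a maximum matching $M$, use the minimum-degree hypothesis to argue that $M$-vertices dominate a large fraction of $V(G)$, and then attach short path-connectors to glue $M$ together and to reach any vertices still missed; the bound $3n/4$ drops out of a counting argument that invokes $\delta \geq 3$ precisely once. Once $D$ is fixed, I would rainbow-color a spanning tree of $G[D]$ with $|D|$ colors, pad with a constant number of extra colors if necessary, and check that every pair $u,v$ admits a rainbow $u,v$-path routed through $D$, since each of $u,v$ is at distance at most $1$ from $D$.

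For part (2), the natural construction is again a connected two-step dominating set $D$, but now of size $|D| \leq 3n/(\delta+1)$. The lemma I would prove is that such a $D$ exists whenever $\delta \geq 4$: take a greedy dominating set (of size roughly $n/(\delta+1)$ by the classical Arnautov--Payan bound), connect its components by connector paths of total length at most $2n/(\delta+1)$, and verify the two-step property. Then I would rainbow-color a spanning tree of $G[D]$ with $|D|-1$ colors and pre-assign $3$ additional colors to edges witnessing, for each non-$D$ vertex $v$, a short path from $v$ to $D$. Checking that these $3$ extra colors suffice, by case-splitting on whether $u$ and $v$ lie in $D$, are adjacent to $D$, or sit two steps from $D$, is routine but notationally heavy.

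The main obstacle in both parts is the extremal lemma bounding $|D|$ in terms of $\delta$; the coloring step, once $D$ is fixed, is straightforward and does not depend on $\delta$ beyond the counting bound. In particular, the sharpness of the constants $3/4$ and $3/(\delta+1)$ is what makes the extremal statements nontrivial and is where the original papers \cite{S2009,CDRV2012} invest most of their work.
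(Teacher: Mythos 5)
This theorem is not proved in the paper at all: it is a verbatim quotation of two known results, attributed to \cite{S2009} and \cite{CDRV2012}, and the authors supply only the citations. So there is no in-paper proof to compare your attempt against; the relevant question is whether your sketch would actually reconstruct the cited results. For part (2) your plan matches the strategy of Chandran et al.: find a connected (two-way) two-step dominating set $D$ of size $O(n/(\delta+1))$ and show that $rc(G) \leq rc(G[D]) + c$ for a small constant $c$. That is the right skeleton, but as written it is a plan rather than a proof: the two extremal lemmas you isolate (existence of $D$ with $|D| \leq 3n/4 - 1$ for $\delta \geq 3$, and $|D| \leq 3n/(\delta+1)$ for general $\delta$) carry essentially all of the content of the cited papers, and you assert them rather than prove them. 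The colouring step also needs more care than ``routine'': for a \emph{two-step} dominating set one must handle vertices at distance exactly $2$ from $D$, and naively each such vertex needs its two connecting edges to avoid clashing with the colours already used inside $D$; this is exactly where the ``two-way'' refinement and the additive constant in \cite{CDRV2012} come from.

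For part (1) there is a more concrete problem with the route you propose. If you only obtain a connected two-step dominating set $D$ with $|D| \leq 3n/4 - 1$ and then pay ``a constant number of extra colors,'' you end up with a bound of the form $3n/4 + c$ for some $c > 0$, which is strictly weaker than the stated $rc(G) \leq 3n/4$. The bound in \cite{S2009} has no additive slack, and the argument there is a direct amortised construction (growing a rainbow-connected subgraph and charging at most $3$ new colours per $4$ newly absorbed vertices, using $\delta \geq 3$ to guarantee the expansion), not a dominating-set argument. So either you need a sharper extremal lemma that absorbs the constant into the $3n/4 - 1$ bound on $|D|$ together with a colouring of $G[D]$ using fewer than $|D|$ colours, or you need to switch to the inductive approach of the original paper. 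As it stands, part (1) of your proposal would not deliver the constant-free bound claimed in the statement.
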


Now observe that $\delta \geq \frac{3n}{\delta +1}+3$ if $\delta \geq 1 + \sqrt{3n+4}.$ With $prc(G) \geq \chi'(G) \geq \Delta(G) \geq \delta(G)$ we thus obtain

\begin{theorem}
Let $G$ be a connected graph of order $n$ and with $\delta(G) \geq 1 + \sqrt{3n+4}.$ Then

$$prc(G) - rc(G) \geq \delta - (\frac{3n}{\delta +1}+3)$$

\end{theorem}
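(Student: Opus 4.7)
The plan is to combine a trivial lower bound on $prc(G)$ with the upper bound on $rc(G)$ that is quoted as item (2) of the preceding theorem and then subtract. Since the hypothesis $\delta(G) \geq 1 + \sqrt{3n+4}$ forces $\delta \geq 4$ for every $n \geq 3$ (indeed $\sqrt{3n+4} \geq \sqrt{13} > 3$), the Chandran--Das--Rajendraprasad--Varma bound applies and gives $rc(G) \leq \frac{3n}{\delta+1}+3$.

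For the lower bound on $prc(G)$, the plan is to invoke Proposition~\ref{Prop_Lower_Bau} together with Vizing's theorem. Explicitly, $prc(G) \geq \chi'(G) \geq \Delta(G) \geq \delta(G)$, where the first inequality is the chromatic-index lower bound from Proposition~\ref{Prop_Lower_Bau}, the second is Vizing's bound $\chi'(G) \geq \Delta(G)$, and the third is the trivial inequality between the maximum and minimum degrees. This chain is the same one already used in the motivating observation preceding the theorem statement, so no new work is needed here.

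Putting the two estimates together yields
\[
prc(G) - rc(G) \;\geq\; \delta \;-\; \left(\frac{3n}{\delta+1}+3\right),
\]
which is exactly the claimed inequality. I would add, as a remark on the hypothesis, the short algebraic verification that $\delta \geq 1+\sqrt{3n+4}$ is equivalent to $(\delta-1)^2 \geq 3n+4$, i.e.\ $\delta^2 - 2\delta - 3 \geq 3n$, i.e.\ $\delta \geq \frac{3n}{\delta+1}+3$; this makes transparent that the bound on $prc(G)-rc(G)$ is nonnegative and hence a nontrivial separation.

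There is essentially no obstacle: the proof is a one-line combination of cited results, and the only substantive point to flag for the reader is that the degree hypothesis has been calibrated precisely to make the right-hand side nonnegative. I would keep the write-up to two short paragraphs, the first establishing the two inequalities and the second performing the subtraction and verifying the equivalence of the degree condition with $\delta \geq \frac{3n}{\delta+1}+3$.
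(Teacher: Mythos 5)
Your proof is correct and coincides with the paper's own argument: the paper derives the theorem from exactly the same two ingredients, namely $prc(G)\geq\chi'(G)\geq\Delta(G)\geq\delta(G)$ and the cited bound $rc(G)\leq\frac{3n}{\delta+1}+3$ for $\delta\geq4$, together with the observation that $\delta\geq1+\sqrt{3n+4}$ guarantees $\delta\geq\frac{3n}{\delta+1}+3$. Your added algebraic verification of that last equivalence is a small but welcome clarification.
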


Further observe that

$$0 \leq prc(G) - \chi'(G) \leq (n-1)-2= n-3$$

for all connected graphs with $n \geq 3,$ where the upper bound is attained for the path $P_n.$

Since $prc(G) \geq max\{rc(G),\chi'(G)\}$ by Corollary 1.2, it is natural to ask whether the difference
$prc(G) - max\{rc(G),\chi'(G)\}$ is unbounded as well. In our next Theorem we show that the difference $prc(G) - rc(G)$ can be arbitrarily large.

\begin{theorem}
\label{Thm_exist_graph_Delta_Diam}
Let $k,t$ be two integers, where $k\geq t\geq1$. There always exists a connected graph $G_{k,t}$ with $\Delta(G)=2t^2+1$ and $diam(G)=2t^2+1+k$ such that $prc(G)\geq rc(G)+t$.
\end{theorem}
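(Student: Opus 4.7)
The plan is to build $G_{k,t}$ explicitly as a ``lollipop'' and then to compare an optimal rainbow colouring against any proper rainbow colouring. Let $K$ be a copy of $K_{2t^2+1}$ and attach a pendant path $v = p_0, p_1, \ldots, p_{2t^2+k}$ of length $2t^2+k$ at a single vertex $v$ of $K$. A direct degree count gives $\Delta(G_{k,t}) = 2t^2+1$ (attained only at $v$), and the eccentricity between the pendant leaf $p_{2t^2+k}$ and any non-$v$ vertex of $K$ yields $diam(G_{k,t}) = 2t^2+k+1$. For $rc$ I would use the lower bound $rc \geq diam$ from Proposition~\ref{Prop_Lower_Bau}; the matching upper bound follows by colouring all clique edges with a single common colour and using $2t^2+k$ pairwise distinct new colours on the pendant edges, which makes $G_{k,t}$ rainbow connected with exactly $2t^2+k+1$ colours.

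For the key lower bound $prc(G_{k,t}) \geq rc(G_{k,t})+t$, fix any proper rainbow colouring and let $C_P$ (respectively $C_K$) denote the set of colours appearing on pendant edges (respectively on clique edges). Because the only path joining the pendant leaf to $v$ is the pendant itself, it must be rainbow, so $|C_P|=2t^2+k$. For each clique vertex $w \neq v$, a rainbow path from the leaf to $w$ traverses the whole pendant and then a $v$-to-$w$ path inside $K$ whose colours all lie in $C_K\setminus C_P$. Writing $s := |C_K\setminus C_P|$, the subgraph $K'\subseteq K$ consisting of edges with colour in $C_K\setminus C_P$ must therefore rainbow-connect $v$ to every other vertex of $K$. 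The identity $|C_P\cup C_K| = |C_P| + s$ then shows that the inequality $s \geq t+1$ is enough to conclude
\[
prc(G_{k,t}) \;\geq\; |C_P| + s \;\geq\; (2t^2+k) + (t+1) \;=\; rc(G_{k,t}) + t.
\]

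The proof thus reduces to the combinatorial claim that in any properly $s$-edge-coloured subgraph $K'$ of $K_{2t^2+1}$, if a fixed vertex $v$ admits rainbow paths to every other vertex, then $s \geq t+1$. For contradiction, assume $s\leq t$; then $\Delta(K')\leq s$ and every rainbow $v$-path in $K'$ has length at most $s$. Counting rainbow walks from $v$ by the ordered sequence of colours used gives the upper bound $\sum_{j=1}^{s} s(s-1)(s-2)\cdots(s-j+1)$ on the number of reachable vertices, which one then combines with the matching structure of proper $(2t^2+1)$-edge-colourings of $K_{2t^2+1}$ (each colour class is a matching of size exactly $t^2$, and every vertex misses exactly one colour) to push the count below $2t^2$. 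This last refinement is the main obstacle: the pure walk count is sharp for small $t$ but becomes loose as $t$ grows, so the argument must exploit the matching structure of proper colourings of $K_{2t^2+1}$ more delicately. A natural backup plan, if the walk-count-plus-matching argument proves too weak on its own, is to augment $G_{k,t}$ by attaching a few extra pendant edges at carefully chosen clique vertices distinct from $v$, since the pairwise rainbow-connectivity constraints among the additional pendant endpoints each force a new ``missing'' colour to appear in $C_K\setminus C_P$, making the bound $s \geq t+1$ follow directly from how many such extra pendants are attached.
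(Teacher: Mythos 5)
Your construction (the lollipop $K_{2t^2+1}$ plus a pendant path of length $2t^2+k$) does have the required maximum degree and diameter, and the reduction to the claim ``$s:=|C_K\setminus C_P|\geq t+1$'' is set up correctly. But that claim is exactly where the proof stops, and you say so yourself: the walk count $\sum_{j=1}^{s} s(s-1)\cdots(s-j+1)$ already exceeds $2t^2$ at $s=t=4$ (it gives $4+12+24+24=64>32$), so the counting argument fails for all but the smallest $t$, and no actual argument exploiting the matching structure of proper colourings of $K_{2t^2+1}$ is supplied. Worse, it is doubtful that the claim is even true for the plain lollipop when $t$ is large: with $t$ extra colours one can in principle build a properly coloured subgraph of the clique in which $v$ rainbow-reaches far more than $2t^2$ vertices (a branching tree of depth $t$ reaches on the order of $t!$ vertices), so the clique offers no geometric obstruction to short rainbow escapes. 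Your fallback of attaching additional pendant edges at other clique vertices does not rescue the statement as written, since any new pendant leaf lies at distance $2t^2+k+2$ from the original leaf, violating the requirement $diam(G)=2t^2+1+k$.

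The paper sidesteps this entire difficulty by replacing the clique with a wheel $W_{2t^2}$ (rim $v_1\ldots v_{2t^2}$, hub $v$) and attaching the path at the hub. There the argument is purely geometric: the path must be rainbow, so its $2t^2+k$ colours are ``used up''; any rainbow path from the far leaf to a rim vertex must end with exactly one spoke $vv_j$ carrying a colour outside those $2t^2+k$, and at most $t$ spokes can carry such colours if only $2t^2+k+t$ colours are available. Those at most $t$ spokes partition the rim into arcs, one of which has length at least $2t$, so some rim vertex is at rim-distance at least $t$ from every new-coloured spoke; a rainbow path from it to the far leaf then needs at least $t+1+2t^2+k$ distinct colours, a contradiction. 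If you want to salvage your approach, the lesson is that the host graph around $v$ must force long detours (a cycle does; a clique does not), rather than hoping a colour-counting argument will defeat the clique's abundance of short paths.
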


\begin{figure}
\begin{center}
\begin{tikzpicture}
\node [bnode] (u1) at (0,0) {};
\node [above] at (0,0) {$v_1$};
\node [bnode] (u2) at (0,-3) {};
\node [below] at (0,-3) {$v_2$};
\node [bnode] (u3) at (1.5,-1.5) {};
\node [above] at (1.5,-1.5) {$u_3$};
\node [bnode] (u4) at (3.5,-1.5) {};
\node [above] at (3.5,-1.5) {$u_4$};
\node [bnode] (u5) at (5.5,-1.5) {};
\node [above] at (5.5,-1.5) {$u_5$};
\node [bnode] (u6) at (7.5,-1.5) {};
\node [above] at (7.5,-1.5) {$u_6$};

\draw (u1)--(u2)--(u3)--(u4)--(u5)--(u6);
\draw (u1)--(u3);
\node [left] at (0,-1.5) {$2$};
\node [above] at (0.75,-0.75) {$1$};
\node [below] at (0.75,-2.25) {$3$};
\node [above] at (2.5,-1.5) {$2$};
\node [above] at (4.5,-1.5) {$4$};
\node [above] at (6.5,-1.5) {$5$};
\end{tikzpicture}
\caption{Graph $G_{1,1}$ with $rc(G)=4$ and $prc(G)=rc(G)+1=5$}
\label{Figure_3}
\end{center}
\end{figure}
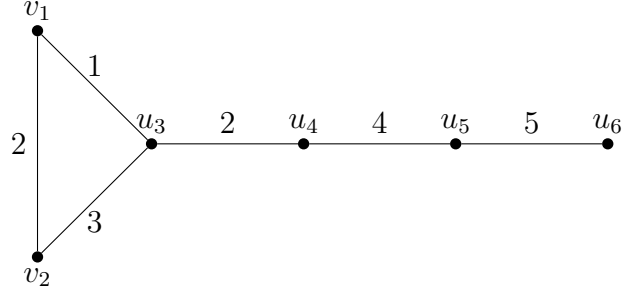

\begin{proof}
Firstly, if $k=t=1$, then we take a connected graph with $\Delta(G)=3$ and $diam(G)=4$ as shown in Figure \ref{Figure_3}. Clearly, $rc(G)=4$ and $prc(G)=5=rc(G)+t$.

Now, we consider $t\geq2$. Let $W_{2t^2}$ be a wheel consisting of a cycle $C=v_1\ldots v_{2t^2}v_1$ and a center vertex $v$. Let $G$ be a connected graph constructed from $W_{2t^2}$ and a path $P=u_{2t^2+1}\ldots u_{4t^2+1+k}$ of order $2t^2+1+k$ by identifying $v$ and $u_{2t^2+1}$ as shown in Figure \ref{Figure_4}. It can be readily seen that $\Delta(G)=2t^2+1$ and $diam(G)=2t^2+1+k$. Hence, $rc(G)\geq 2t^2+1+k$. Let us define a colouring $c$ with $2t^2+1+k$ colours to colour all the edges of $G$ as follows.
$$
c(e)=
\begin{cases}
1 & \mbox{if} \hspace{0.5em} e=vv_i, \forall i\in[2t^2]\\
i+1-2t^2 & \mbox{if} \hspace{0.5em} e=u_iu_{i+1}, \forall i\in[2t^2+1,4t^2+k]\\
i & \mbox{if} \hspace{0.5em} e=v_iv_{i+1}, \forall i\in[t^2]\\
i-t^2 & \mbox{if} \hspace{0.5em} e=v_iv_{i+1},\forall i\in[t^2+1,2t^2-1]\\
t^2 & \mbox{if}\hspace{0.5em} e=v_{2t^2}v_1
\end{cases}
$$
It can be readily seen that $G$ is rainbow connected with $2t^2+1+k$ colours. Thus, $rc(G)\leq 2t^2+1+k$. So we deduce that $rc(G)=2t^2+1+k$.

Next, we show that $prc(G)\geq 2t^2+1+k+t$.

Suppose that $prc(G)\leq 2t^2+k+t$. Then there is a colouring $c$ with $2t^2+k+t$ colours which makes $G$ properly rainbow connected. Since $P$ is the only path from $u_{2t^2+1}$
to $u_{4t^2+1+k},$ $P$ is a rainbow path. Hence we may assume that its $2t^2+k$ edges are coloured with the colours $1, 2, \ldots, 2t^2+k.$ Since $G$ is properly rainbow connected, all the edges that are incident to $v$ receive distinct colours. Moreover, every rainbow path from $u_{4t^2+1+k}$ to a vertex $v_i, 1 \leq i \leq 2t^2,$ uses exactly one edge $vv_j,$ whose colour is distinct from
$1, 2, \ldots, 2t^2+k.$ Hence we may assume that $p$ of these edges, where $1 \leq p \leq t,$ have a colour from the set $2t^2+k+1, \ldots, 2t^2+k+t.$
Suppose first that $p=1.$ We may assume that $vv_1$ has the only colour from the set $2t^2+k+1, \ldots, 2t^2+k+t.$ Then any rainbow path from $v_{t^2+1}$ to $u_{4t^2+1+k}$ has
$t^2 + 1 + (2t^2+k) > 2t^2+k+t$ edges, a contradiction.
Next suppose that $p \geq 2.$ So there are $p$ integers
$1 \leq i_1 < i_2 < \ldots < i_p \leq 2t^2$ such that the edges $vv_{i_1}, \ldots, vv_{i_p}$ have these $p$ colours. So there is a partition of the cycle $C$ into $p$ paths, each connecting
$v_{i_j}$ with $v_{i_{j+1}}$ along the cycle of length $|i_{+1}-i_j|$ (modulo $2t^2$). Hence the longest of these $p$ paths has length at least $\frac{2t^2}{p} \geq \frac{2t^2}{t} = 2t.$
We may assume that the path from $v_{i_1}$ to $v_{i_2}$ has length at least $2t$ and that $i_1=1.$ Then any rainbow path from $v_{t+1}$ to $u_{4t^2+1+k}$ has at least $t+1+2t^2+k$ edges,
a contradiction.

\end{proof}

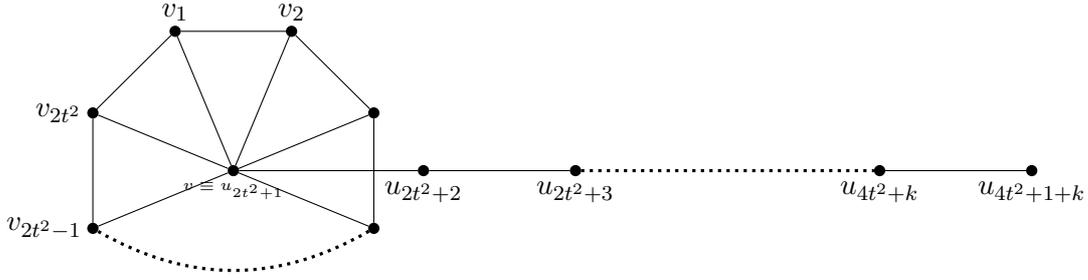
\begin{figure}
\begin{center}
\begin{tikzpicture}
\def\rot{8}
\node [bnode] (v) at (0,0) {};
\node [below] at (0,0) {\tiny $v\equiv u_{2t^2+1}$};
\node [bnode] (v1) at ({45*4.5:2}) {};
\node [left] at ({45*4.5:2}) {\small $v_{2t^2-1}$};
\node [bnode] (v2) at ({45*3.5:2}) {};
\node [left] at ({45*3.5:2}) {\small $v_{2t^2}$};
\node [bnode] (v3) at ({45*2.5:2}) {};
\node [above] at ({45*2.5:2}) {\small $v_1$};
\node [bnode] (v4) at ({45*1.5:2}) {};
\node [above] at ({45*1.5:2}) {\small $v_2$};
\node [bnode] (v5) at ({45*0.5:2}) {};
\node [bnode] (v6) at ({45*-0.5:2}) {};
\draw (v1)--(v2)--(v3)--(v4)--(v5)--(v6);
\draw (v)--(v1);
\draw (v)--(v2);
\draw (v)--(v3);
\draw (v)--(v4);
\draw (v)--(v5);
\draw (v)--(v6);
\draw (v6) [dotted, very thick, bend left] to (v1);
\node [bnode] (u1) at (2.5,0) {};
\node [below] at (2.5,0) {\small $u_{2t^2+2}$};
\node [bnode] (u2) at (4.5,0) {};
\node [below] at (4.5,0) {\small $u_{2t^2+3}$};
\node [bnode] (u3) at (8.5,0) {};
\node [below] at (8.5,0) {\small $u_{4t^2+k}$};
\node [bnode] (u4) at (10.5,0) {};
\node [below] at (10.5,0) {\small $u_{4t^2+1+k}$};
\draw (v)--(u1)--(u2);
\draw [very thick, dotted] (u2)--(u3);
\draw (u3)--(u4);
\end{tikzpicture}
\caption{The graph $G_{k,t}$ for $t \geq 2$}
\label{Figure_4}
\end{center}
\end{figure}

\section{Graph classes with $prc(G) = rc(G)$}

\begin{proposition}
Let $C_n$ by a cycle of order $n \geq 4.$ Then
$$prc(G) = rc(G) = \lceil\frac{n}{2}\rceil.$$
\end{proposition}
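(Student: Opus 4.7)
The plan is to derive both equalities from results already in the paper. Corollary \ref{Cor_Jiang_Cyc} records $prc(C_n) = \lceil n/2 \rceil$ for $n \geq 4$, and Proposition \ref{Prop_Lower_Bau} gives $rc(G) \leq prc(G)$; together these yield $rc(C_n) \leq \lceil n/2 \rceil$. It therefore suffices to prove the matching lower bound $rc(C_n) \geq \lceil n/2 \rceil$, after which both equalities fall out simultaneously.

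When $n$ is even, $diam(C_n) = n/2 = \lceil n/2 \rceil$, and the inequality $diam(G) \leq rc(G)$ from Proposition \ref{Prop_Lower_Bau} immediately settles this case.

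The delicate case is $n = 2k+1$ odd, where $diam(C_n) = k$, so the diameter bound alone only yields $rc(C_n) \geq k$, one short of the target $k+1$. My plan here is a contradiction argument. Assume some edge-colouring $c$ with $k$ colours rainbow-connects $C_n$; label the vertices $v_0, v_1, \ldots, v_{2k}$ cyclically and write $e_i = v_iv_{i+1}$ with indices modulo $n$. For each pair $(v_i, v_{i+k})$ the two arcs on $C_n$ have lengths $k$ and $k+1$; since no rainbow path can have more than $k$ edges when only $k$ colours are available, the length-$(k+1)$ arc is disqualified, so the arc $e_i, e_{i+1}, \ldots, e_{i+k-1}$ must realise each of the $k$ colours exactly once. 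Applying the same observation to the shifted pair $(v_{i+1}, v_{i+k+1})$ shows that $\{e_{i+1}, \ldots, e_{i+k}\}$ is also a full palette, and comparing the two $k$-element sets forces $c(e_i) = c(e_{i+k})$ for every $i$.

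The final step exploits $\gcd(k, 2k+1) = 1$: iterating the periodicity $c(e_i) = c(e_{i+k})$ shows that all $n$ edges carry the same colour, contradicting that each length-$k$ rainbow arc uses $k \geq 2$ distinct colours (valid because $n \geq 5$). The main obstacle is precisely this odd case, where the diameter bound is off by one; the key insight is converting the ``one extra edge'' into a global periodicity that is killed by the coprimality of $k$ and $n = 2k+1$.
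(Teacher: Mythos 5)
Your proof is correct. The paper states this proposition without any proof at all, implicitly relying on the known formula $rc(C_n)=\lceil n/2\rceil$ of Chartrand et al.\ together with Corollary~\ref{Cor_Jiang_Cyc}; you instead supply a complete, self-contained argument. Your reduction is sound: Corollary~\ref{Cor_Jiang_Cyc} and $rc(G)\leq prc(G)$ from Proposition~\ref{Prop_Lower_Bau} give $rc(C_n)\leq\lceil n/2\rceil$, the diameter bound settles even $n$, and for $n=2k+1$ your argument is exactly the classical one: with only $k$ colours the $(k{+}1)$-arc between antipodal-type pairs $(v_i,v_{i+k})$ cannot be rainbow, so every length-$k$ arc carries a full palette, whence $c(e_i)=c(e_{i+k})$ for all $i$, and $\gcd(k,2k+1)=1$ collapses the colouring to a single colour, contradicting $k\geq 2$ (which holds since $n\geq 5$ in the odd case). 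What your route buys is independence from the external citation for $rc(C_n)$; what the paper's (implicit) route buys is brevity, since both ingredients are already in the literature it cites. No gaps.
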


Next observe that any colouring, which makes a tree rainbow connected, is a proper colouring. So we deduce that

\begin{proposition}
Let $T$ by a tree of order $n \geq 2.$ Then
$$prc(G) = rc(G) = n-1.$$
\end{proposition}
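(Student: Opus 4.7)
The plan is to reduce the proposition to a one-step deduction from the parenthetical observation stated immediately above it, together with Theorem 1.3. First I would make the observation precise: if $c$ is an edge-colouring that rainbow-connects a tree $T$, then $c$ is automatically proper. Indeed, in a tree the unique path between any two vertices is the only candidate, so for any two adjacent edges $e = uw$ and $f = wv$ the unique $u$-$v$ path is exactly $uwv$; the rainbow condition then forces $c(e) \neq c(f)$, which is precisely the proper-colouring condition on adjacent edges.

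Given this, the proposition follows immediately. Any optimal rainbow-connecting colouring of $T$ uses $rc(T)$ colours and, by the observation, is simultaneously proper, hence is a properly rainbow-connecting colouring. This gives $prc(T) \leq rc(T)$, and combined with $rc(T) \leq prc(T)$ from Proposition 1.1 we obtain $rc(T) = prc(T)$. To identify the common value, I would invoke Theorem 1.3: a tree of order $n$ has size $m = n-1$, so $prc(T) = m = n-1$.

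If a self-contained determination of the common value is preferred, the upper bound $rc(T) \leq n-1$ is obtained by assigning all $n-1$ edges distinct colours, and for the lower bound $rc(T) \geq n-1$ one argues that any two distinct edges $e, f \in E(T)$ must receive different colours: removing $e$ splits $T$ into two subtrees, and choosing endpoints of $e$ and $f$ that lie in the farthest subtree components produces a pair of vertices whose unique connecting path contains both $e$ and $f$, forcing $c(e) \neq c(f)$. There is no real obstacle in any of this; the entire content is the structural observation about trees, and the numerical equality $n-1$ is handed to us by Theorem 1.3.
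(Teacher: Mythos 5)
Your proposal is correct and matches the paper's approach: the paper likewise justifies this proposition solely by the observation that any rainbow-connecting colouring of a tree is automatically proper (so $prc(T)\leq rc(T)$, with the reverse inequality from Proposition 1.1), the value $n-1$ coming from Theorem 1.3 or the standard fact that all edges of a tree must receive distinct colours. Your optional self-contained argument for $rc(T)=n-1$ is also sound.
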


Starting with a given tree $T$ we can generate a large variety of classes of graphs satisfying $prc(G)=rc(G).$ For example we can attach to each leaf of
$T$ a finite number of cycles each of length at least 4.

Another class of graphs is described in \cite{JLLM2019}. Here $g(G)$ denotes the girth of $G.$

\begin{proposition}
Let $G$ be a connected graph with $rc(G) < g(G)-2.$ Then
$$prc(G) = rc(G).$$
\end{proposition}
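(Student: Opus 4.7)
The plan is to show that any rainbow colouring of $G$ using exactly $rc(G)$ colours must already be a proper edge-colouring; combined with the lower bound $prc(G) \geq rc(G)$ from Proposition 1.1, this immediately gives equality. So I would fix such an optimal rainbow colouring $c$ and argue by contradiction: suppose $c$ is not proper, so two adjacent edges $uv$ and $vw$ satisfy $c(uv)=c(vw)$.

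By rainbow connectivity there is a rainbow $u$–$w$ path $P$ in $G$ of length at most $rc(G)$. Since $P$ is rainbow and $c(uv)=c(vw)$, the path $P$ cannot contain both of the edges $uv$ and $vw$. The key step is a short case analysis on the position of $v$ relative to $P$, in each case exhibiting a cycle of length strictly less than $g(G)$, contradicting the definition of the girth.

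First, if $P$ avoids $v$ entirely, then appending the two edges $wv$ and $vu$ to $P$ yields a cycle of length at most $rc(G)+2 < g(G)$. If $P$ starts with the edge $uv$, then the sub-path $P_2$ of $P$ from $v$ to $w$ cannot use the edge $vw$ (same colour as $uv$, which is already in $P$), so $P_2$ together with the edge $wv$ is a cycle of length at most $(rc(G)-1)+1 = rc(G) < g(G)$; the situation where $P$ ends with the edge $vw$ is symmetric. Finally, if $v$ is an internal vertex of $P$ and $P$ contains neither of the edges $uv$, $vw$, then the sub-path $P_1$ of $P$ from $u$ to $v$ has length at least $2$ and avoids $uv$, so $P_1$ together with the edge $vu$ is a cycle of length at most $rc(G)+1 < g(G)$.

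In every case the girth condition $rc(G) < g(G)-2$ is violated, so no such monochromatic adjacent pair exists, $c$ is proper, and hence $prc(G)\leq rc(G)$. I do not anticipate any serious obstacle here; the only thing to be a little careful with is making sure the closed walks constructed really are cycles, which is guaranteed because $P$ is a simple path and in each case the extra edges used are not part of the sub-path of $P$ being considered.
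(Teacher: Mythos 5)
Your proof is correct. Note that the paper does not actually prove this proposition --- it is quoted from Jiang et al.\ \cite{JLLM2019} --- so your argument serves as a self-contained justification rather than a reproduction of an in-paper proof. The key observations all hold: an optimal rainbow colouring uses rainbow paths of length at most $rc(G)$; since $P$ is a simple $u$--$w$ path, the edge $uv$ can occur in $P$ only as its first edge and $vw$ only as its last, and rainbowness forbids both simultaneously, so your four cases are exhaustive and each produces a genuine cycle of length at most $rc(G)+2 < g(G)$, contradicting the girth. Hence the optimal rainbow colouring is automatically proper and $prc(G)\leq rc(G)$, with the reverse inequality supplied by Proposition \ref{Prop_Lower_Bau}.
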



\section{Graph classes with $prc(G) = \chi'(G)$}

The proper rainbow connection numbers of complete graphs $K_n$ are determined in Theorem \ref{Thm_Bau_Compl}. Now we consider the proper rainbow connection number of connected graphs whose diameter is $2$.

\begin{proposition}
\label{Propo_diam}
Let $G$ be a connected graph of order $n \geq 3.$ If $diam(G)=2$, then $prc(G)=\chi'(G)$.
\end{proposition}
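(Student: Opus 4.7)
The plan is to show that any proper edge-colouring of $G$ with $\chi'(G)$ colours already makes $G$ properly rainbow connected, which combined with the lower bound $\chi'(G) \leq prc(G)$ from Proposition \ref{Prop_Lower_Bau} yields equality.

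First I would fix a proper edge-colouring $c$ of $G$ using exactly $\chi'(G)$ colours (such a colouring exists by definition of the chromatic index). I then need to verify that every pair of distinct vertices $u, v \in V(G)$ is joined by a rainbow path under $c$. I split into two cases according to $d(u,v)$. If $u$ and $v$ are adjacent, then the edge $uv$ itself is a rainbow path of length $1$. If $u$ and $v$ are at distance $2$ (which is the only other possibility since $diam(G)=2$), then there exists a common neighbour $w$ of $u$ and $v$, giving a $u,v$-path $uwv$ of length $2$.

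The key observation is that in this length-$2$ path, the two edges $uw$ and $wv$ share the vertex $w$, so they are adjacent. Because $c$ is a proper edge-colouring, $c(uw) \neq c(wv)$, so $uwv$ is automatically rainbow. Thus every pair of vertices is connected by a rainbow path, so $c$ witnesses $prc(G) \leq \chi'(G)$.

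Combined with $\chi'(G) \leq prc(G)$ from Proposition \ref{Prop_Lower_Bau}, we conclude $prc(G) = \chi'(G)$. There is no real obstacle here: the proof is essentially the observation that for graphs of diameter $2$, propriety of the colouring already implies that every shortest path (which has length at most $2$) is rainbow, so propriety and rainbow connectivity coincide in this regime.
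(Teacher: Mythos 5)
Your proposal is correct and follows exactly the same argument as the paper: take any proper edge-colouring with $\chi'(G)$ colours, note that adjacent vertices are joined by a single edge and non-adjacent vertices (at distance $2$) by a path $uwv$ whose two edges are adjacent and hence differently coloured, then combine with the lower bound $\chi'(G)\leq prc(G)$ from Proposition \ref{Prop_Lower_Bau}. No differences worth noting.
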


\begin{proof}
Let $c:1\rightarrow[\chi'(G)]$  be a proper edge-colouring of $G.$ Now we show that for every pair ofvertices $u,v\in V(G)$, there is at least one rainbow path. If $uv\in E(G)$, then $uv$ is the rainbow path between the two vertices $u,v$. On the other hand, if $uv\notin E(G)$, there is at least one vertex, say $w$, such that $w\in N_G(u)$ and $w\in N_G(v),$ since $diam(G)=2$. Clearly, $c(uw)\neq c(wv)$, since $G$ is proper. Hence, $uwv$ is the rainbow path connecting two vertices $u,v$. We conclude that $G$ is properly rainbow connected. Thus, $prc(G)\leq\chi'(G)$.

With the aid of Proposition \ref{Prop_Lower_Bau}, we are now able to obtain that $prc(G)=\chi'(G)$.
\end{proof}

By using Proposition \ref{Propo_diam}, we determine proper rainbow connection numbers of some graphs whose diameter equals $2$.

First, we determine the proper rainbow connection number for wheels.

\begin{proposition}
\label{Propo_wheel}
For each integer $n\geq4$, $prc(W_n)=n$
\end{proposition}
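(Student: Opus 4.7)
The plan is to reduce the proposition to a computation of $\chi'(W_n)$ via the diameter-two criterion already established. First I would verify that $\mathrm{diam}(W_n)=2$ for every $n\geq 4$: the hub $v$ is adjacent to every cycle vertex, so any two non-hub vertices are joined by a path of length $2$ through $v$, and since $n\geq 4$ the cycle contains non-adjacent vertices, so the diameter cannot be $1$. Proposition~\ref{Propo_diam} then gives $prc(W_n)=\chi'(W_n)$, and the proposition reduces to showing $\chi'(W_n)=n$.

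The lower bound $\chi'(W_n)\geq n$ is immediate from Vizing's theorem, since the hub has degree $n$ and so $\Delta(W_n)=n$. For the upper bound the plan is to exhibit an explicit proper $n$-edge-colouring. Writing the cycle as $C=v_1v_2\ldots v_nv_1$ with all indices and colours taken modulo $n$ in $\{1,\ldots,n\}$, and denoting the hub by $v$, I would set
\[
c(vv_i)=i \qquad\text{and}\qquad c(v_iv_{i+1})=i+2.
\]
At the hub the $n$ spokes carry the $n$ distinct colours. At a cycle vertex $v_i$ the three incident edges $v_{i-1}v_i$, $vv_i$, and $v_iv_{i+1}$ receive the three colours $i+1$, $i$, and $i+2$ respectively, which are pairwise distinct whenever $n\geq 3$. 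Hence this is a proper edge-colouring with exactly $n$ colours, giving $\chi'(W_n)\leq n$.

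Combining the two bounds yields $\chi'(W_n)=n$, and so $prc(W_n)=n$ by Proposition~\ref{Propo_diam}. The only point that needs any care is the modular bookkeeping, namely verifying that $i,i+1,i+2$ stay pairwise distinct modulo $n$, which is automatic for $n\geq 4$. No substantive obstacle is expected beyond this explicit construction, since the heavy lifting is performed by Proposition~\ref{Propo_diam}.
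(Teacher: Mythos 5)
Your proposal is correct and follows essentially the same route as the paper: establish $\mathrm{diam}(W_n)=2$, invoke Proposition~\ref{Propo_diam}, and compute $\chi'(W_n)=n$ via $\Delta(W_n)=n$ together with an explicit proper $n$-edge-colouring (indeed your colouring $c(vv_i)=i$, $c(v_iv_{i+1})=i+2 \pmod n$ is exactly the paper's, just written without spelling out the wraparound cases). The only cosmetic quibble is that the lower bound $\chi'(W_n)\geq\Delta(W_n)$ is the trivial bound, not Vizing's theorem.
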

\begin{proof}
Suppose that a wheel $W_n$ of order $n+1$ consists of a cycle $C_{n}=v_1v_2\ldots v_nv_1$ and a single vertex $w$ joined to all vertices of cycle $C_n$.

We assign colours $c:E(W_n)\rightarrow[n]$ to all the edges of the wheel $W_n$ as follows: $c(wv_i)=i$, $c(v_iv_{i+1})=i \mod(n)+2$ for $\forall i\in[n-2]$, $c(v_{n-1}v_n)=1$, $c(v_nv_1)=2$. It can be readily seen that $W_n$ is properly coloured. Hence, $\chi'(W_n)\leq n$. On the other hand, $\chi'(W_n)\geq\Delta(W_n)=n$. We deduce that $\chi'(W_n)=n$.

Clearly, $diam(W_n)=2$. By using Proposition \ref{Propo_diam}, $prc(W_n)=\chi'(n)=n$. We obtain the result.
\end{proof}

Next we determine the proper rainbow connection number for the complete bipartite graph $K_{s,t}.$

\begin{theorem}
\label{Thm_Bipartite}
Let $s,t$ be two integers. If $K_{s,t}$ is a complete bipartite graph, then $prc(K_{s,t})=\max\lbrace s,t\rbrace$.
\end{theorem}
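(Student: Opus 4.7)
The plan is to reduce the theorem to two results already established in the paper: the tree formula $prc(T)=n-1$ and Proposition \ref{Propo_diam} giving $prc(G)=\chi'(G)$ whenever $diam(G)=2$. Without loss of generality assume $s\leq t$, so $\max\{s,t\}=t$.

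First I would dispose of the degenerate case $s=1$. Then $K_{1,t}$ is a star, which is a tree on $n=t+1$ vertices, so the tree proposition above immediately gives $prc(K_{1,t})=n-1=t=\max\{s,t\}$.

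For the main case $s,t\geq 2$, I would observe that any two vertices in the same part are joined by a path of length two through any vertex in the opposite part, so $diam(K_{s,t})=2$. Proposition \ref{Propo_diam} then yields $prc(K_{s,t})=\chi'(K_{s,t})$. Since $K_{s,t}$ is bipartite, it is class 1 by K\"onig's edge-colouring theorem, so $\chi'(K_{s,t})=\Delta(K_{s,t})=\max\{s,t\}=t$, completing the identification. The lower bound $prc(K_{s,t})\geq \chi'(K_{s,t})$ comes for free from Proposition \ref{Prop_Lower_Bau}.

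There is essentially no obstacle here; the only point that requires explicit mention outside the paper's previously cited results is the invocation of K\"onig's theorem to conclude that the complete bipartite graph is class 1, and the trivial diameter computation. If the authors prefer to avoid citing K\"onig, one can exhibit an explicit proper edge-colouring of $K_{s,t}$ with $t$ colours by labelling the parts $\{x_1,\dots,x_s\}$ and $\{y_0,\dots,y_{t-1}\}$ and colouring $x_iy_j$ with $(i+j)\bmod t$; this uses exactly $t$ colours and is proper, so $\chi'(K_{s,t})\leq t$, and the matching lower bound $\chi'\geq\Delta=t$ is immediate.
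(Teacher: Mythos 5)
Your proof is correct and follows essentially the same route as the paper: observe that $diam(K_{s,t})=2$, apply Proposition~\ref{Propo_diam} to get $prc(K_{s,t})=\chi'(K_{s,t})$, and conclude via K\"onig's theorem that $\chi'(K_{s,t})=\Delta(K_{s,t})=\max\{s,t\}$. Your separate treatment of the star $K_{1,t}$ is unnecessary (for $t\geq 2$ it still has diameter $2$, so the same argument applies), but it does no harm, and your explicit colouring $(i+j)\bmod t$ is a fine self-contained substitute for citing K\"onig.
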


Let us mention the following result which is very useful to prove our Theorem \ref{Thm_Bipartite}.

\begin{theorem}[K\"{o}nig et al. \cite{Konig1916}]
\label{Thm_Konig_Chi_Bipar}
If $G$ is bipartite, then $G$ is in Class 1.
\end{theorem}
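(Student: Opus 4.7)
The plan is to prove König's edge-colouring theorem by induction on $|E(G)|$. Let $G$ be a bipartite graph with bipartition $(X,Y)$ and maximum degree $\Delta = \Delta(G)$; I need to construct a proper edge-colouring using only $\Delta$ colours. The base case (no edges) is trivial. For the inductive step, pick any edge $uv \in E(G)$ with $u \in X$ and $v \in Y$. The graph $G' = G - uv$ is bipartite with $\Delta(G') \leq \Delta$, so by the induction hypothesis it admits a proper edge-colouring $c \colon E(G') \to [\Delta]$.

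Since $d_{G'}(u) \leq \Delta - 1$ and $d_{G'}(v) \leq \Delta - 1$, there exist colours $\alpha$ missing at $u$ and $\beta$ missing at $v$. If some colour is missing at both endpoints of $uv$, I simply assign that colour to $uv$ and am done; otherwise $\alpha \neq \beta$, the colour $\alpha$ appears at $v$, and the colour $\beta$ appears at $u$. This is where I would invoke a Kempe-chain argument.

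Consider the subgraph $H \subseteq G'$ formed by the edges coloured $\alpha$ or $\beta$. Each vertex has $H$-degree at most $2$, so $H$ is a disjoint union of paths and even cycles. Because $v$ has an $\alpha$-edge but no $\beta$-edge, it is the endpoint of a maximal path $P$ in $H$ whose first edge is coloured $\alpha$ and whose colours alternate $\alpha, \beta, \alpha, \beta, \ldots$. Swap the colours $\alpha$ and $\beta$ along $P$; the result is still a proper edge-colouring of $G'$, and now $\alpha$ is missing at $v$.

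The crucial (and only bipartite) step is to confirm that $u$ also still misses $\alpha$ after the swap, i.e.\ that $P$ does not terminate at $u$. Suppose for contradiction that $P$ does reach $u$. Since $u$ has no $\alpha$-edge in $c$, the edge of $P$ incident to $u$ must be coloured $\beta$, so $P$ starts at $v$ with an $\alpha$-edge and ends at $u$ with a $\beta$-edge. Counting the alternation, $|E(P)|$ is then even. But every edge of a bipartite graph with parts $(X,Y)$ crosses the bipartition, so an even-length walk starting at $v \in Y$ must end in $Y$, contradicting $u \in X$. Hence $P$ avoids $u$, the swap does not alter the palette at $u$, and setting $c(uv) = \alpha$ extends $c$ to a proper $\Delta$-edge-colouring of $G$, completing the induction. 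The main obstacle is the Kempe-chain bookkeeping, but bipartiteness gives the parity collision that forbids $P$ from closing up on $uv$, which is precisely why Class~$2$ graphs (e.g.\ odd cycles) can evade this argument.
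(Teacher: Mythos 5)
Your argument is a correct and complete rendition of the classical Kempe-chain proof of K\"onig's edge-colouring theorem: the induction, the choice of missing colours $\alpha$ at $u$ and $\beta$ at $v$, the swap along the maximal $\alpha$-$\beta$ path from $v$, and the parity argument (an alternating path ending in a $\beta$-edge has even length, hence cannot run from $v\in Y$ to $u\in X$) are all sound, and you correctly observe that $u$ can only appear in the $\alpha$-$\beta$ subgraph as a path endpoint since it misses $\alpha$. The paper itself gives no proof of this statement --- it is quoted as Theorem~\ref{Thm_Konig_Chi_Bipar} with a citation to K\"onig's 1916 paper --- so there is nothing to compare against; your proof is the standard one and fills that gap correctly.
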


Now we are able to prove Thereom \ref{Thm_Bipartite}.

\begin{proof}
With the aid of Theorem \ref{Thm_Konig_Chi_Bipar}, it can be readily seen that $\chi'(K_{s,t})=\Delta(K_{s,t})=\max\lbrace s,t\rbrace$. On the other hand, $diam(K_{s,t})=2$. By using Proposition \ref{Propo_diam}, $prc(K_{s,t})=\chi'(K_{s,t})$.

We conclude that $prc(K_{s,t})=\max\lbrace s,t\rbrace$.
\end{proof}

We know that, the chromatic index $\chi'(G)$ depends on the property of $G$ being overfull or not overfull. $G$ is called \textit{overfull} if the number of vertices $n$ is odd, and the number of edges $m$ is greater than $\frac{1}{2}\bigtriangleup(G)(n-1)$. Let us mention Hoffman's result et al. \cite{Hoffman92} on the chromatic index of complete multipartite graph.

\begin{lemma}[Hoffman et al. \cite{Hoffman92}]
\label{Lem_Hoffman_complete_multi}
Let $G$ be a complete multipartite graph. Then $\chi'(G)=\bigtriangleup(G)$ if $G$ is not overfull. Otherwise, $\chi'(G)=\bigtriangleup(G)+1$.
\end{lemma}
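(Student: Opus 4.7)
The claim is an equivalence, so the plan is to verify both directions.

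Direction 1 ($G$ overfull $\Rightarrow \chi'(G)=\Delta(G)+1$): this is a general fact about overfull graphs and uses no multipartite structure. In any proper $k$-edge-colouring, each colour class is a matching; when $n=|V(G)|$ is odd, a matching saturates an even number of vertices so has at most $(n-1)/2$ edges, whence $m \leq k(n-1)/2$. Overfullness gives $m > \frac{1}{2}\Delta(G)(n-1)$, forcing $k \geq \Delta(G)+1$; Vizing's theorem then yields $\chi'(G)=\Delta(G)+1$.

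Direction 2 ($G$ not overfull $\Rightarrow \chi'(G)=\Delta(G)$): this is the substantive part and requires an explicit proper $\Delta(G)$-edge-colouring of $G=K_{n_1,n_2,\ldots,n_k}$. Order the parts $n_1 \leq n_2 \leq \cdots \leq n_k$, so $\Delta(G)=n-n_1$. I would split on the parity of $n$. If $n$ is even, view $G$ as a spanning subgraph of $K_n$ and start from the round-robin $1$-factorisation of $K_n$ into $n-1$ perfect matchings $M_1,\ldots,M_{n-1}$. The plan is to merge these in $n-n_1$ groups in such a way that, after discarding edges inside partite sets, each merged class is still a matching of $G$; the arithmetic works because exactly $n_1-1$ pairs of matchings can be amalgamated per partite set, collapsing the palette from $n-1$ down to $n-n_1=\Delta(G)$. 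If $n$ is odd and $G$ is not overfull, the hypothesis $2m \leq \Delta(G)(n-1)$ gives enough slack to adjoin a single auxiliary vertex to the smallest partite set, producing a complete multipartite graph $G^+$ of even order $n+1$ whose maximum degree is still $\Delta(G)$; apply the even case to $G^+$ and restrict the colouring to $E(G)$.

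The principal obstacle is the merging step in the even case: arranging the vertex labelling and the cyclic structure of the $1$-factorisation so that all edges within any fixed partite set can be scheduled into a common colour class, while preserving the matching property on the cross-edges that remain in $G$. This is exactly where the non-overfull threshold is invoked, and it is the combinatorial heart of the statement; the odd case is then a reasonably routine ``pad and project'' reduction, and Direction 1 is elementary counting.
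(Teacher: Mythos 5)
This lemma is quoted in the paper from Hoffman and Rodger \cite{Hoffman92} without proof, so there is no in-paper argument to compare against; what you are attempting is to reprove a theorem whose proof occupies an entire research article. Your Direction 1 is correct and complete: for odd $n$ every matching has at most $(n-1)/2$ edges, so $m>\frac{1}{2}\Delta(G)(n-1)$ forces $\chi'(G)\geq\Delta(G)+1$, and Vizing gives equality. Direction 2, however, is where the whole content of the theorem lives, and your proposal has genuine gaps there rather than just unexecuted routine steps.

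First, your even case cannot ``invoke the non-overfull threshold'': a graph of even order is never overfull by definition, so the even case must be proved unconditionally, and the merging of round-robin colour classes is exactly the hard combinatorial work you acknowledge not having done. It is not clear that $n_1-1$ disjoint pairs of factors can always be amalgamated so that, after deleting the intra-part edges (which form $K_{n_1}\cup\cdots\cup K_{n_k}$, in general scattered across many factors), every merged class is still a matching; this needs a concrete construction, not a degree count. Second, the odd-case ``pad and project'' is actually broken as stated. Adjoining a vertex to the smallest part $V_1$ leaves the degrees in $V_1$ at $n-n_1$ but raises every other vertex's degree to $n-n_j+1$, so $\Delta(G^+)=\Delta(G)+1$ whenever the minimum part size is not unique. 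For example $K_{1,1,3}$ has $n=5$, $\Delta=4$, $m=7\leq 8$, so it is not overfull and the lemma asserts $\chi'=4$; but your padded graph $K_{2,1,3}$ has maximum degree $5$, and restricting its colouring to $G$ only recovers the Vizing bound $\chi'(G)\leq 5$. A correct reduction must embed $G$ in a $\Delta(G)$-regular class 1 graph of even order, which requires a different and more careful construction. So the proposal identifies the right skeleton but does not constitute a proof.
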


Now the, proper connection number of a complete multipartite graph is determined as follows.

\begin{proposition}
\label{Propo_Multipartite}
Let $G$ be a complete multipartite graph. If $G$ is overfull, then $prc(G)=\bigtriangleup(G)+1$. Otherwise, $prc(G)=\bigtriangleup(G)$.
\end{proposition}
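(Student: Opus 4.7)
The plan is to reduce this proposition to a straightforward combination of Proposition \ref{Propo_diam} and Lemma \ref{Lem_Hoffman_complete_multi}. The crucial structural observation is that a connected complete multipartite graph $G=K_{n_1,n_2,\ldots,n_k}$ with $k\geq 2$ parts always has diameter at most $2$: two vertices in distinct parts are adjacent, and two vertices in the same part share a common neighbour in any other part. So the only case in which $\mathrm{diam}(G)\neq 2$ is when every part has size exactly one, that is, $G\cong K_n$.

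First I would dispose of the case $G\cong K_n$ separately, since there Proposition \ref{Propo_diam} does not apply. Here Theorem \ref{Thm_Bau_Compl} gives $prc(K_n)=\chi'(K_n)$, which equals $n-1=\Delta(K_n)$ when $n$ is even and $n=\Delta(K_n)+1$ when $n$ is odd. A short computation with $m=\binom{n}{2}$ and $\tfrac{1}{2}\Delta(K_n)(n-1)=\tfrac{(n-1)^2}{2}$ shows $K_n$ is overfull precisely when $n$ is odd, which matches the claim.

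In the remaining case, some part has size at least $2$ and hence $\mathrm{diam}(G)=2$. Applying Proposition \ref{Propo_diam} yields $prc(G)=\chi'(G)$, and Lemma \ref{Lem_Hoffman_complete_multi} then identifies $\chi'(G)$ with $\Delta(G)$ or $\Delta(G)+1$ according to whether $G$ is overfull or not. Combining the two cases finishes the proof.

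There is no real obstacle here; the work is essentially bookkeeping. The only point that requires any attention is the verification that the overfull/non-overfull dichotomy for $K_n$ agrees with the parity split in Theorem \ref{Thm_Bau_Compl}, so that the statement of the proposition is uniform across all complete multipartite graphs and one does not need to carve out $K_n$ as an exceptional family.
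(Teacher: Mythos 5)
Your proof is correct and follows essentially the same route as the paper: observe that a connected complete multipartite graph has diameter at most $2$, apply Proposition \ref{Propo_diam} to get $prc(G)=\chi'(G)$, and then invoke Lemma \ref{Lem_Hoffman_complete_multi}. In fact you are slightly more careful than the paper, whose proof asserts $diam(G)=2$ outright; this fails for $G\cong K_n$ (where the diameter is $1$), and your separate treatment of that case via Theorem \ref{Thm_Bau_Compl}, together with the check that $K_n$ is overfull exactly when $n$ is odd, closes that small gap.
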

\begin{proof}
Suppose that $G$ is a complete multipartite graph. It can be readily seen that $diam(G)=2$. By using Proposition \ref{Propo_diam}, $prc(G)=\chi'(G)$.

Now, applying Lemma \ref{Lem_Hoffman_complete_multi}, $\chi'(G)=\bigtriangleup(G)+1$ if $G$ is overfull or $\chi'(G)=\bigtriangleup(G)$ if $G$ is not overfull. Hence, the result is obtained.
\end{proof}

In \cite{Chartrand2008}, Chartrand et al. showed that $rc(G)=1$ if and only if $G$ is complete. After that, Caro et al. \cite{Caro2008} investigated graphs with small rainbow connection numbers and they gave a sufficient condition that guarantees $rc(G)=2$.

\begin{theorem}[Caro et al. \cite{Caro2008}]
Let $G$ be a nontrivial, connected graph of minimum degree $\delta(G)$. If $\delta(G)\geq\frac{n}{2}+log_2n$, then $rc(G)=2$
\end{theorem}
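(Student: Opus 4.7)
The plan is to establish $rc(G) \le 2$ via a standard probabilistic argument: colour each edge of $G$ independently and uniformly at random with one of two colours, red or blue, and show that with positive probability every pair of vertices is joined by a rainbow path of length at most $2$. Since any edge is a rainbow path of length $1$, the only pairs to worry about are non-adjacent pairs $u,v$, which must be connected by a path $uwv$ whose two edges receive different colours.

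First I would bound the number of common neighbours of a non-adjacent pair. By inclusion–exclusion,
$$|N(u)\cap N(v)| \ge d(u)+d(v)-n \ge 2\delta(G)-n \ge 2\log_2 n,$$
using the hypothesis $\delta(G)\ge \tfrac{n}{2}+\log_2 n$. For each common neighbour $w$ of $u$ and $v$, let $A_w$ denote the bad event that $c(uw)=c(wv)$; clearly $\Pr[A_w]=\tfrac{1}{2}$.

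The key step is to notice that for distinct common neighbours $w_1,w_2\in N(u)\cap N(v)$ the four edges $uw_1,w_1v,uw_2,w_2v$ are pairwise distinct (since $u,v,w_1,w_2$ are four distinct vertices and the pairs $uw_i, w_iv$ involve $w_i$), so the events $A_{w_1}$ and $A_{w_2}$ are independent. Thus the probability that the pair $u,v$ has no rainbow path of length $2$ is
$$\Pr\Bigl[\bigcap_{w\in N(u)\cap N(v)} A_w\Bigr] \;=\; \Bigl(\tfrac{1}{2}\Bigr)^{|N(u)\cap N(v)|} \;\le\; \Bigl(\tfrac{1}{2}\Bigr)^{2\log_2 n} \;=\; \frac{1}{n^2}.$$

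Finally I would apply the union bound over all at most $\binom{n}{2}<\tfrac{n^2}{2}$ pairs, obtaining total failure probability strictly less than $1$. Hence some colouring with $2$ colours rainbow-connects $G$, so $rc(G)\le 2$; combined with $rc(G)\ge 1$ and the fact that $rc(G)=1$ holds only for $K_n$ (which, if excluded, gives equality), this yields the result. The main obstacle is the independence step: one has to be careful that the edge pairs $(uw,wv)$ are truly disjoint across different $w$, so that the product bound on the intersection of the $A_w$ is legitimate; if $u$ and $v$ were allowed to coincide with some $w$, or if one were to sum over paths sharing an edge, independence would fail. All other steps are routine degree accounting and a direct union bound.
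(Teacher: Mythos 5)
Your probabilistic argument is correct: the independence of the events $A_w$ across distinct common neighbours $w$ holds because they depend on pairwise disjoint edge sets, the bound $|N(u)\cap N(v)|\ge 2\delta(G)-n\ge 2\log_2 n$ gives failure probability at most $n^{-2}$ per non-adjacent pair, and the union bound over fewer than $n^2/2$ pairs finishes the job. The paper states this theorem as a cited result of Caro et al.\ without reproducing a proof, and your random two-colouring plus union bound is essentially the argument from that source; your closing remark is also the right caveat, since as literally stated the conclusion $rc(G)=2$ should really be $rc(G)\le 2$ (complete graphs satisfy the degree hypothesis yet have $rc=1$).
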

Next, we show that dense graphs have large proper rainbow connection number.
\begin{proposition}
\label{Propo_minimum}
Let $G$ be a proper edge-coloured graph of order $n$ and minimum degree $\delta(G)$. If $\delta(G)\geq\frac{n-1}{2}$, then $prc(G)= \chi'(G).$
\end{proposition}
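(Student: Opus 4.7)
The plan is to reduce the statement directly to Proposition \ref{Propo_diam} by showing that the degree hypothesis forces $\mathrm{diam}(G) \leq 2$. The combination $prc(G)\geq \chi'(G)$ already comes for free from Proposition \ref{Prop_Lower_Bau}, so only the upper bound $prc(G)\leq \chi'(G)$ needs attention, and that is precisely what Proposition \ref{Propo_diam} gives for diameter-two graphs.

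The core step is a short Dirac-type counting argument. I would take two distinct vertices $u,v \in V(G)$; if $uv \in E(G)$ then $d(u,v)=1$ and we are done. Otherwise both $N(u)$ and $N(v)$ are subsets of $V(G)\setminus\{u,v\}$, a set of size $n-2$, yet
\[
|N(u)|+|N(v)| \;\geq\; 2\delta(G) \;\geq\; n-1.
\]
By inclusion--exclusion, $|N(u)\cap N(v)| \geq (n-1)-(n-2) = 1$, so $u$ and $v$ share a common neighbour and $d(u,v)=2$. The same inequality also implies that $G$ is connected (two components would each require at least $\frac{n+1}{2}$ vertices). Hence $\mathrm{diam}(G)\leq 2$.

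Applying Proposition \ref{Propo_diam} then yields $prc(G)\leq \chi'(G)$, and combined with the lower bound from Proposition \ref{Prop_Lower_Bau} we conclude $prc(G)=\chi'(G)$. No serious obstacle is expected: the only subtlety is handling the small/boundary cases (for instance $n=2$ where $\delta\geq 1/2$ just forces $G=K_2$), and these are immediate.
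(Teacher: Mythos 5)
Your proposal is correct and follows essentially the same route as the paper: an inclusion--exclusion count on $|N(u)|+|N(v)|\geq n-1$ against the $n-2$ available vertices forces a common neighbour for any non-adjacent pair, giving $\mathrm{diam}(G)\leq 2$, after which Proposition \ref{Propo_diam} finishes the argument. No substantive differences.
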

\begin{proof}
We show that $diam(G) \leq 2.$ Let $u,v \in V(G)$ be two non adjacent vertices. Then
$d(u) + d(v) = |N(u) \cup N(V| + |N(u) \cap N(v)| \geq 2 \cdot \frac{n-1}{2} = n-1.$ Since $|N(u) \cup N(v)| \leq n-2,$ we conclude $|N(u) \cap N(v)| \geq 1.$ Hence there is a proper coloured path $uwv$ for a vertex $w \in N(u) \cap N(v).$ This shows that
$diam(G) \leq 2.$ Now $prc(G) = \chi'(G)$ follows by Proposition \ref{Propo_diam}.


\end{proof}

This proof immediately leads to the following extension.

\begin{proposition}
\label{Propo_minimumdegreesum1}
Let $G$ be a proper edge-coloured graph of order $n \geq 3.$ If $d(u) + d(v) \geq n-1$ for every pair of non adjacent vertices $u,v \in V(G),$
then $prc(G)= \chi'(G).$
\end{proposition}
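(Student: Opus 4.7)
The plan is to proceed exactly as in the proof of the preceding Proposition \ref{Propo_minimum}: reduce to showing $\mathrm{diam}(G) \leq 2$ and then invoke Proposition \ref{Propo_diam}. The only difference is that the uniform bound $\delta(G) \geq \frac{n-1}{2}$ is replaced by a pairwise Ore-type condition, but the underlying neighbourhood computation goes through verbatim for each non-adjacent pair.

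First I would dispose of the degenerate case: if $G$ has no non-adjacent pair then $G = K_n$, so $\mathrm{diam}(G) = 1$ and the statement $prc(G) = \chi'(G)$ follows from Theorem \ref{Thm_Bau_Compl} (alternatively, any proper edge-colouring trivially makes $K_n$ properly rainbow connected via single edges). So I may assume $G$ has at least one pair of non-adjacent vertices, and in particular $\mathrm{diam}(G) \geq 2$.

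Next I would pick an arbitrary non-adjacent pair $u,v \in V(G)$. Since $N(u), N(v) \subseteq V(G) \setminus \{u,v\}$, I obtain $|N(u) \cup N(v)| \leq n-2$. Using inclusion-exclusion together with the hypothesis $d(u) + d(v) \geq n-1$, this gives
$$|N(u) \cap N(v)| = d(u) + d(v) - |N(u) \cup N(v)| \geq (n-1) - (n-2) = 1,$$
so there exists a common neighbour $w$ and hence a path $uwv$ of length $2$. Since the non-adjacent pair was arbitrary, every such pair is joined by a path of length $2$, which combined with the preceding paragraph yields $\mathrm{diam}(G) = 2$. Now Proposition \ref{Propo_diam} applies and gives $prc(G) = \chi'(G)$.

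There is no real obstacle here; the statement is genuinely just a bookkeeping extension of Proposition \ref{Propo_minimum}, replacing ``every vertex has degree at least $(n-1)/2$'' by ``every non-adjacent pair has degree sum at least $n-1$''. Both hypotheses are used in exactly the same place, namely to force $|N(u) \cap N(v)| \geq 1$ for any non-adjacent $u,v$, and after that the argument is identical.
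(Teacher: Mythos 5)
Your proof is correct and matches the paper's intent exactly: the paper gives no separate proof for this proposition, stating only that the proof of Proposition \ref{Propo_minimum} ``immediately leads to'' it, and your argument is precisely that proof with the degree-sum hypothesis substituted at the one place where the minimum-degree bound was used. The explicit handling of the complete-graph case is a small added courtesy, not a departure.
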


\begin{proposition}
\label{Propo2_minimum}
Let $G$ be a proper edge-coloured graph of order $n \geq 9$ and minimum degree $\delta(G)$. If $\delta(G)\geq\frac{n-2}{2}$, then $prc(G)= \chi'(G).$
\end{proposition}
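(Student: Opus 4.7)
The plan is to extend the proof of Proposition \ref{Propo_minimum}. First note that if $n$ is odd, then $\delta(G) \geq (n-2)/2$ forces $\delta(G) \geq (n-1)/2$ and Proposition \ref{Propo_minimum} applies directly; so I may assume $n$ is even. If $\mathrm{diam}(G) \leq 2$, the result follows from Proposition \ref{Propo_diam}. Otherwise I pick non-adjacent $u,v \in V(G)$ with $N(u) \cap N(v) = \emptyset$, and from
$$n-2 \leq d(u) + d(v) = |N(u)| + |N(v)| = |N(u) \cup N(v)| \leq n-2$$
equality holds throughout. This forces $d(u) = d(v) = (n-2)/2 =: m$ and $V(G) = \{u,v\} \sqcup A \sqcup B$ with $A = N(u)$, $B = N(v)$. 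Connectivity gives $e(A,B) \geq 1$, so $d(u,v)=3$; and $n \geq 9$ together with $n$ even yields $m \geq 4$.

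The goal is then to exhibit a proper $\chi'(G)$-edge-colouring $c$ making every such pair $(u,v)$ rainbow-connected by a length-$3$ path, since in any proper colouring the pairs at distance $\leq 2$ are already rainbow-connected. A path $u\text{-}a\text{-}b\text{-}v$ with $ab \in E(A,B)$ is rainbow if and only if $c(ua) \neq c(bv)$; call $ab$ \emph{bad} otherwise. Since the edges at $u$ (respectively at $v$) receive pairwise distinct colours, each colour contributes at most one bad edge in $E(A,B)$, so the number of bad edges is at most $m$. If $|E(A,B)| > m$, any proper $\chi'(G)$-colouring has a good edge and we are done. Otherwise $|E(A,B)| \leq m$, and the degree-sum
$$\sum_{a \in A}\bigl(d_A(a) + d_B(a)\bigr) \geq m(m-1)$$
forces $e(A) \geq m(m-2)/2$, so $A$ is within $\lfloor m/2 \rfloor$ edges of a clique; by symmetry so is $B$.

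In this extremal case $G$ consists of two near-cliques on $m+1$ vertices joined by at most $m$ bridge edges, which I would colour explicitly. I would start from a standard proper edge-colouring of $K_{m+1}$ on each side using $\chi'(K_{m+1}) \in \{m, m+1\}$ colours, extend by colouring the bridge edges, and then permute colour classes inside the near-cliques so that, for every bad pair $(u,v)$ and every bridge edge $ab$ used by one of its length-$3$ paths, $c(ua) \neq c(bv)$. The main obstacle is performing this alignment simultaneously across all bad pairs without exceeding $\chi'(G)$ colours; this is where the bound $m \geq 4$ (supplied by $n \geq 9$ and $n$ even) provides the necessary freedom in the near-$1$-factorisation of $K_{m+1}$ to close the argument, and the construction would fail for the very small values of $m$ excluded by the hypothesis.
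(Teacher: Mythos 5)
Your reduction to the extremal structure is sound: the parity argument, the forced equalities $d(u)=d(v)=\frac{n-2}{2}=m$ with $V(G)=\{u,v\}\sqcup A\sqcup B$, the count of at most $m$ bad bridge edges per distance-$3$ pair, and the conclusion that $G[A]$ and $G[B]$ each miss at most $\lfloor m/2\rfloor$ edges are all correct. But the proof is not complete. The entire difficulty of the extremal case is deferred to a construction you describe only as permuting colour classes so that $c(ua)\neq c(bv)$ for the relevant bridge edges, and the central obstacle --- performing this alignment simultaneously for \emph{all} distance-$3$ pairs $(u',v')$ with $u'\in A\cup\{u\}$ and $v'\in B\cup\{v\}$, with possibly a single bridge edge available and only $\chi'(G)$ colours --- is named but not resolved. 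No argument is given that the freedom afforded by $m\geq 4$ in the near-$1$-factorisations of the two sides actually suffices, and this is exactly where a correct proof must use $n\geq 9$ (the graph $F_8$ in the paper shows the statement fails for $n=8$). This is a genuine gap, and it is created by an unnecessary self-imposed restriction: you insist on rainbow paths of length exactly $3$, which forces you to build a bespoke colouring rather than work with an arbitrary one.

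The paper's proof avoids any construction by allowing a detour of length $4$ and showing that an \emph{arbitrary} proper $\chi'(G)$-colouring already rainbow-connects $G$. For a distance-$3$ pair $u,w$: if some $u_i\in N(u)$ has two neighbours $w_j,w_k\in N(w)$, then one of $uu_iw_jw$, $uu_iw_kw$ is rainbow, since the only obstruction is $c(uu_i)=c(w_jw)$ and $c(w_jw)\neq c(w_kw)$; so one may assume the cross edges form a matching, pick a cross edge $u_1w_1$, and if $c(uu_1)=c(w_1w)$ exploit the fact that $u_1$ has at least $\delta-2\geq 2$ neighbours inside $N(u)$ to replace the edge $uu_1$ by a two-edge detour $uu_ju_1$, yielding a rainbow path $uu_ju_1w_1w$. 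If you want to salvage your route you must actually prove the alignment lemma you are assuming; the far easier fix is to drop the length-$3$ requirement and use the density of $G[A]$ you already established to route such a length-$4$ rainbow path.
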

\begin{proof}
Let $u,w \in V(G)$ be any two vertices. If $d(u,w)\leq 2,$ then $u$ and $w$ are connected by a rainbow path of length at most two. Hence we may assume that $d(u,w) \geq 3.$ Then $N[u] \cup N[w] = V(G)$ implying $\delta(G) = \frac{n-2}{2}.$ Since $G$ is connected we conclude that $d(u,w)=3.$ We may assume $4 \leq d(u) \leq d(w),$ since $n \geq 9.$ Let $U = N(u) = \{u_1, u_2, \ldots, u_{d(u)}\}$ and $W = N(w) = \{w_1, w_2, \ldots, w_{d(w)}\}.$
Suppose $u_iw_j, u_iw_k \in E(G).$ Then at least one of the two paths $uu_iw_jw$ and $uu_iw_kw$ is a rainbow $uw$-path. By symmetry we conclude that $E(U,W)$ is an induced matching. Suppose $u_1w_1 \in E(G),$ but $uu_1w_1w$ is no rainbow $uw$-path. We may assume that $c(uu_1)=c(w_1w)=1, c(u_1w_1)=2.$ Since $N(u_1) \cap W = \{w_1\},$ we conclude that
$|N(u_1) \cap (U \setminus \{u_1\})| \geq \delta(G)-2 \geq \frac{n-6}{2} \geq 2.$ We may assume that $u_1u_2,u_1u_3 \in E(G).$ Now $uu_2u_1w_1w$ or $uu_3u_1w_1w$ is a rainbow $uw$-path. Hence $G$ is rainbow connected.
Now $prc(G) = \chi'(G)$ follows by Proposition \ref{Propo_diam}.
\end{proof}

\noindent{\bf Sharpness:} For $n=8$ consider the following graph $F_8$ with vertices $V(F_8)=\{u,u_1,u_2,u_3,w,w_1,w_2,w_3\}$ and edges
$E(F_8)=\{uu_1,uu_2,uu_3,u_1u_2,u_2u_3,u_1w_1,u_3w_3,$ $w_1w_2,w_2w_3,ww_1,ww_2,ww_3\}.$ Then $\chi'(F_8)=3$ and we may assume that
$c(uu_i)=i$ for $1 \leq i \leq 3.$ Then the colours of five further edges are uniquely determined as follows:
$c(u_1u_2)=3, c(u_2u_3)=1, c(u_1w_1)=c(u_3w_3)=c(ww_2)=2.$
For the four remaining edges we obtain (1) $c(w_2w_3)=c(ww_1)=3$ and $c(w_1w_2)=c(ww_3)=1$ or
(2) $c(w_2w_3)=c(ww_1)=1$ and $c(w_1w_2)=c(ww_3)=3.$

This is a proper edge-colouring of $F_8,$ but $F_8$ is not rainbow connected. If we recolour in (1) the edges $u_2u_3$ and $w_2w_3$ by colour $4,$ then $F_8$ becomes proper rainbow connected. Hence $prc(F_8)=4.$ If we switch in (1) the colours of $u_1u_2, u_2u_3$ and $w_1w_2, w_2w_3,$ then $F_8$ becomes
rainbow connected showing that $rc(F_8)=3 = diam(F_8)$.

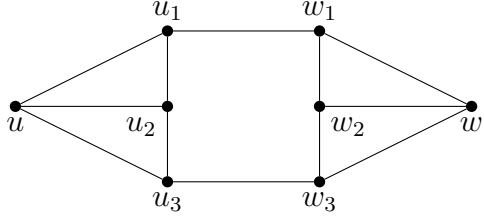
\begin{figure}
\begin{center}
\begin{tikzpicture}
\node [bnode] (u) at (0,0){};
\node [below] at (0,0) {$u$};
\node [bnode] (u1) at (2,1){};
\node [above] at (2,1) {$u_1$};
\node [bnode] (u2) at (2,0){};
\node [below left] at (2,0) {$u_2$};
\node [bnode] (u3) at (2,-1){};
\node [below] at (2,-1){$u_3$};
\node [bnode] (w1) at (4,1){};
\node [above] at (4,1) {$w_1$};
\node [bnode] (w2) at (4,0){};
\node [below right] at (4,0) {$w_2$};
\node [bnode] (w3) at (4,-1){};
\node [below] at (4,-1) {$w_3$};
\node [bnode] (w) at (6,0){};
\node [below] at (6,0) {$w$};
\draw (u)--(u1)--(w1);
\draw (u)--(u2);
\draw (u)--(u3)--(w3);
\draw (u1)--(u2)--(u3);
\draw (w)--(w1);
\draw (w)--(w2);
\draw (w)--(w3);
\draw (w1)--(w2)--(w3);
\end{tikzpicture}
\caption{Graph $F_8$ with $\chi'(F_8)=3$ but $prc(G)=4$}
\end{center}
\end{figure}

This proof immediately leads to the following extension.

\begin{proposition}
\label{Propo_minimumdegreesum2}
Let $G$ be a proper edge-coloured graph of order $n \geq 9.$ If $d(u) + d(v) \geq n-2$ for every pair of non adjacent vertices $u,v \in V(G),$
then $prc(G)= \chi'(G).$
\end{proposition}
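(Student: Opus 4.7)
The plan is to mimic the proof of Proposition \ref{Propo2_minimum}, showing that any proper edge-colouring with $\chi'(G)$ colours already rainbow-connects $G$. Fix such a colouring $c$ and any two vertices $u,v \in V(G)$. If $uv\in E(G)$, then $uv$ itself is a rainbow path, so assume $u$ and $v$ are non-adjacent, in which case $d(u)+d(v)\geq n-2$ by hypothesis. Either $N(u)\cap N(v)\neq\emptyset$, in which case any common neighbour $x$ gives a rainbow path $uxv$ by properness at $x$; or $N(u)\cap N(v)=\emptyset$, which forces $|N[u]\cup N[v]|=d(u)+d(v)+2\geq n$, so we have equality $d(u)+d(v)=n-2$ and $V(G)=\{u\}\sqcup U\sqcup W\sqcup\{v\}$ where $U=N(u)$ and $W=N(v)$. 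Since $G$ is connected there must be an edge between $U$ and $W$, so $d(u,v)=3$.

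In this harder case, we may assume $d(u)\leq d(v)$, so $d(v)\geq\lceil(n-2)/2\rceil\geq 4$ for $n\geq 9$. Arguing for a contradiction, suppose no rainbow $uv$-path exists. Then the analysis of $3$-paths $uu_iw_jv$ runs exactly as in Proposition \ref{Propo2_minimum}: if some $u_i$ had two $W$-neighbours $w_j,w_k$, properness at $v$ would give $c(w_jv)\neq c(w_kv)$, and at least one of $uu_iw_jv$, $uu_iw_kv$ would be rainbow. Hence $E(U,W)$ is an induced matching, and for every matching edge $u_iw_j$ we must have $c(uu_i)=c(w_jv)$.

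The main obstacle is the final step, where the original proof searches for a helper vertex $u_k\in N(u_1)\cap(U\setminus\{u_1\})$: under the weaker degree-sum hypothesis, the vertex $u$ (and hence $u_1$, whose degree is bounded below by $n-2-d(v)=d(u)$) may have degree as small as $1$, so no such $u_k$ need exist. The plan is to look on the $W$-side instead. Fix any matching edge $u_1w_1$ and set $c(uu_1)=c(w_1v)=1$, $c(u_1w_1)=2$. Since $w_1$ is non-adjacent to $u$, the degree-sum condition gives $d(w_1)\geq n-2-d(u)\geq\lceil(n-2)/2\rceil\geq 4$. Using $u\notin N(w_1)$, $v\in N(w_1)$, and $|N(w_1)\cap U|=1$ (from the induced matching), we obtain $|N(w_1)\cap W\setminus\{w_1\}|\geq d(w_1)-2\geq 2$. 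Among these at most one vertex $w_k$ can have $c(w_kv)=2$ (properness at $v$), so pick $w_k$ with $c(w_kv)\neq 2$.

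The length-$4$ path $u,u_1,w_1,w_k,v$ then has edge colours $1,2,c(w_1w_k),c(w_kv)$; the pairs forced distinct by properness (at $u_1$, $w_1$, $w_k$, $v$) cover everything except $c(w_kv)\neq 2$, which is exactly the constraint satisfied by our choice of $w_k$. Thus this path is rainbow, contradicting the assumption. Therefore every two vertices are joined by a rainbow path, so $prc(G)\leq\chi'(G)$, and combining with $prc(G)\geq\chi'(G)$ from Proposition \ref{Prop_Lower_Bau} yields $prc(G)=\chi'(G)$.
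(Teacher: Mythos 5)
Your proof is correct, and it is worth noting that the paper gives no argument at all for Proposition~\ref{Propo_minimumdegreesum2} beyond the remark that the proof of Proposition~\ref{Propo2_minimum} ``immediately leads'' to it. Your write-up shows that the extension is in fact not verbatim: the only step of that earlier proof which uses the degree hypothesis quantitatively is the bound $|N(u_1)\cap(U\setminus\{u_1\})|\geq\delta(G)-2\geq 2$, and under the weaker degree-sum condition the low-degree endpoint may have $d(u)\in\{1,2,3\}$, in which case $N(u_1)\cap(U\setminus\{u_1\})$ can be empty and the detour $uu_2u_1w_1w$ is unavailable. Your repair --- running the identical argument on the high-degree side, where $d(w_1)\geq n-2-d(u)=d(v)\geq\lceil(n-2)/2\rceil\geq 4$ guarantees two neighbours of $w_1$ inside $W$, and then checking that $uu_1w_1w_kv$ is rainbow once $c(w_kv)\neq 2$ --- is exactly the right fix, and all the properness checks you list (including $c(w_1w_k)\neq 1$ via the edge $w_1v$ at $w_1$, and $c(w_kv)\neq 1$ via $w_1v$ at $v$) are sound. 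So the two arguments share the same skeleton (reduce to $d(u,v)=3$, show $E(U,W)$ is a matching with $c(uu_i)=c(w_jv)$ on matching edges, then find a length-$4$ rainbow detour); what yours buys is that it actually closes the case the paper's ``immediate extension'' silently skips.
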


\begin{proposition}
\label{Propo3_minimum}
Let $G$ be a proper edge-coloured graph of order $n \geq 9$ and minimum degree $\delta(G)$. If $\delta(G)\geq\frac{n+k}{3}$ for an integer $k \geq 3$, then $prc(G)= \chi'(G).$
\end{proposition}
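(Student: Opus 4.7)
The plan is to mirror the argument of Proposition~\ref{Propo2_minimum}: I fix an arbitrary proper edge-colouring $c:E(G)\rightarrow[\chi'(G)]$ and prove that $G$ is already rainbow connected under $c$, which yields $prc(G)\leq\chi'(G)$; the reverse inequality is Proposition~\ref{Prop_Lower_Bau}.

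Pick any two vertices $u,w\in V(G)$. If $d(u,w)\leq 2$, the shortest $u$-$w$ path is proper and hence automatically rainbow, so I may assume $d(u,w)\geq 3$. Let $U=N(u)$, $W=N(w)$, and $R=V(G)\setminus(\{u,w\}\cup U\cup W)$; the hypothesis $\delta(G)\geq (n+k)/3$ forces $|U|,|W|\geq (n+k)/3$ and hence $|R|\leq (n-6-2k)/3$. The target is to exhibit a rainbow length-$3$ path $uu_iw_jw$ with $u_iw_j\in E(U,W)$; since $c$ is proper at $u_i$ and at $w_j$, it suffices to ensure $c(uu_i)\neq c(w_jw)$.

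The core step is a colour-counting argument. The colours on the $|U|$ edges incident to $u$ are pairwise distinct, and similarly for $w$, so for every colour appearing at both $u$ and $w$ there is at most one ``blocking'' pair $(u_i,w_j)\in U\times W$. Using the degree sum $\sum_{u_i\in U}d_G(u_i)\geq|U|\cdot\delta$ together with the smallness of $R$, I plan to double-count to lower-bound $|E(U,W)|$ and show it strictly exceeds the number of blocked pairs, so that a usable edge $u_iw_j$ remains and delivers a rainbow length-$3$ path.

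The main obstacle is the regime in which $E(U,W)$ is too sparse for the length-$3$ count to close. Here I would exploit the slack $k\geq 3$ to find either an edge $u_iu_{i'}$ inside $U$ completing a rainbow length-$4$ detour $uu_{i'}u_iw_jw$, or a two-edge detour through a vertex of $R$, in the spirit of the sharpness example $F_8$ following Proposition~\ref{Propo2_minimum}. Verifying that $k\geq 3$ is exactly what guarantees such a detour always exists is the delicate part; the hypothesis $n\geq 9$ is used to rule out small configurations analogous to $F_8$ where this extended search would fail.
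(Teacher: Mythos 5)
Your overall strategy coincides with the paper's: fix an arbitrary proper $\chi'(G)$-edge-colouring and show it is already rainbow connected, handling each non-adjacent pair $u,w$ by exhibiting a short rainbow path and using counting on $R=V(G)\setminus(N[u]\cup N[w])$. For pairs at distance exactly $3$ your plan is essentially the paper's Case 1 (the paper phrases the counting via the observation that a vertex $u_i$ with two neighbours $w_j,w_k\in W$ always yields a rainbow path $uu_iw_jw$ or $uu_iw_kw$, so that failure forces $E(U,W)$ to be an induced matching, and then bounds $d(u_1)$ by $|R|+4$ to contradict $\delta>n/3$; your ``blocked pairs versus $|E(U,W)|$'' count is the same idea and, as you concede, does not close without the length-$4$ detour inside $U$).

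The genuine gap is that you only ever aim at paths of the form $uu_iw_jw$ with $u_iw_j\in E(U,W)$, which exist only when $d(u,w)=3$. The hypothesis $\delta\geq\frac{n+k}{3}$ does \emph{not} force $\operatorname{diam}(G)\leq 3$: for $d(u,w)=4$ one only needs $2(\delta+1)\leq n-1$, which is compatible with $\delta\geq\frac{n+k}{3}$ once $n\geq 2k+9$, and distance $5$ is likewise not excluded. The paper spends three further cases on exactly this: for $d(u,w)=4$ it takes a geodesic $uu_1xw_1w$ with $x\in R$ and shows that either $|N(x)\cap(U\cup W)|\geq 5$ (so some $uu_ixw_jw$ is rainbow, since at each internal vertex properness kills all but one colour clash) or else $d(x)$ is too small, contradicting $\delta\geq\frac{n+1}{3}$; for $d(u,w)=5$ it argues similarly with $|N(x_1)\cap U|\geq 6$; and for $d(u,w)\geq 6$ it finds three pairwise disjoint closed neighbourhoods, giving $3(\delta+1)\leq n$, a contradiction. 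This is where the precise thresholds $k\geq 3$ and the constants $5$, $6$ actually enter — not, as you suggest, in ruling out $F_8$-like configurations (which concern the weaker hypothesis of Proposition~\ref{Propo2_minimum} and have $n=8<9$ anyway). Without the distance-$4$, distance-$5$ and distance-$\geq 6$ analyses your argument does not establish rainbow connectivity, and the distance-$3$ case itself is still only a plan whose ``delicate part'' is precisely the step the paper resolves via the induced-matching observation and the degree bound on $u_1$.
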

\begin{proof}
Let $u,w \in V(G)$ be any two vertices. If $d(u,w)\leq 2,$ then $u$ and $w$ are connected by a rainbow path of length at most two. Hence we may assume that $d(u,w) \geq 3.$\\
\noindent{\bf Case 1} $d(u,w)=3$\\
Let $R = V(G) \setminus (N[u] \cup N[w]).$ Then $|R| \leq n - 2(\delta + 1).$ Following arguments in the previous proof we conclude that $E(U,W)$ is an induced matching. Then
$\delta \leq d(u_1) \leq |R|+4 \leq n - 2\delta +2$ implying $\delta \leq \frac{n}{3},$ a contradiction.\\
\noindent{\bf Case 2} $d(u,w)=4$\\
Let $uu_1xw_1w$ be a $uw$-path of length $4,$ and let $R = V(G) \setminus (N[u] \cup N[w]).$ Thus $x \in R$ and $|R| \leq n - 2(\delta +1).$
If $|N(x) \cap (U \cup W)| \geq 5,$ then there is always a rainbow $uw$-path $uu_ixw_jw$ for two vertices $u_i$ and $w_j.$ Hence we may assume that $|N(x) \cap (U \cup W)| \leq 4$ implying
$\delta - 4 \leq d(x)-4 \leq |R|-1 \leq n - 2\delta -3.$  This gives $\delta \leq \frac{n+1}{3},$ a contradiction.\\
\noindent{\bf Case 3} $d(u,w)=5$\\
Let $uu_1x_1x_2w_1w$ be a $uw$-path of length $5,$ and let $R = V(G) \setminus (N[u] \cup N[w]).$ Thus $x_1,x_2 \in R$ and $|R| \leq n - 2(\delta +1).$
Note that $N(x_1) \cap W = N(x_2) \cap U = \emptyset.$
If $|N(x_1) \cap U| \leq 5,$ then $\delta - 5 \leq d(x)-5 \leq |R|-1 \leq n - 2\delta -3.$  This gives $\delta \leq \frac{n+2}{3},$ a contradiction.
Hence we may assume that $|N(x_1) \cap U \geq 6.$ Now there is always a rainbow $uw$-path $uu_ix_1x_2w_1w$ for a vertex $u_i.$  \\
\noindent{\bf Case 4} $d(u,w)=t \geq 6$\\
Let $uu_1x_1x_2 \ldots x_{t-3}w_1w$ be a $uw$-path of length $t.$ Then $N[x_2] \cap N[u] = N[x_2] \cap N[w] = \emptyset$ implying $3(\delta +1) \leq n,$ a contradiction.
\end{proof}

\section{Lower bound}
In this section, we consider the lower bound of proper rainbow connection in dense graphs and some conditions on size of graphs.

\subsection{Dense graphs}
Dense graphs tend to have a small rainbow connection number. However, dense graphs have a large proper rainbow connection number. This follows immediately from its average degree.
\begin{proposition}
\label{Propo_average_degree}
Let $G$ be a nontrivial, connected graph of order $n\geq2$ and average degree $\overline{d}(G)=\frac{2\vert E(G)\vert}{n}$. Then $prc(G)\geq\lceil\overline{d}(G)\rceil$.
\end{proposition}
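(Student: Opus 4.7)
The plan is to chain together three elementary inequalities and then exploit integrality. By Proposition \ref{Prop_Lower_Bau}, we already know $\chi'(G) \leq prc(G)$, so it suffices to bound $\chi'(G)$ from below by the average degree.

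First I would invoke the trivial (but crucial) bound $\chi'(G) \geq \Delta(G)$, which is immediate from the definition of a proper edge-colouring: the $\Delta(G)$ edges incident to a vertex of maximum degree must all receive pairwise distinct colours. Next I would use the standard fact that the maximum degree of any graph is at least its average degree, namely $\Delta(G) \geq \overline{d}(G) = \frac{2|E(G)|}{n}$; this follows since $\sum_{v \in V(G)} d(v) = 2|E(G)| = n \cdot \overline{d}(G)$, so at least one vertex has degree $\geq \overline{d}(G)$.

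Chaining these gives $prc(G) \geq \chi'(G) \geq \Delta(G) \geq \overline{d}(G)$. Since $prc(G)$ is an integer, the inequality upgrades to $prc(G) \geq \lceil \overline{d}(G) \rceil$, which is exactly the claim.

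There is essentially no obstacle here; every ingredient is either stated earlier in the paper (the lower bound $\chi'(G) \leq prc(G)$ from Proposition \ref{Prop_Lower_Bau}) or is textbook material (Vizing-type bound $\Delta \leq \chi'$ and the handshaking-lemma consequence $\Delta \geq \overline{d}$). The only subtle point worth writing out explicitly is the final rounding step, justifying the ceiling by the integrality of $prc(G)$.
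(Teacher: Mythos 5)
Your proof is correct and follows essentially the same chain of inequalities as the paper's own argument, namely $prc(G)\geq\chi'(G)\geq\Delta(G)\geq\overline{d}(G)$ followed by rounding up via integrality (the paper applies the ceiling at $\Delta(G)$ rather than at $prc(G)$, an immaterial difference). No gaps.
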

\begin{proof}
It can be readily seen that $prc(G)\geq\bigtriangleup(G)$ since $prc(G)\geq\chi'(G)$ by Proposition \ref{Prop_Lower_Bau} and $\chi'(G)\geq\bigtriangleup(G)$ by Vizing's Theorem. On the other hand, $\bigtriangleup(G)\geq\lceil\overline{d}(G)\rceil$. Hence, $prc(G)\geq\lceil\overline{d}(G)\rceil$.

We obtain the result.
\end{proof}

\begin{proposition}
\label{Propo1_size_graph}
Let $G$ be a connected graph of order $n$ and size $\vert E(G)\vert\geq k\frac{n}{2}$. Then $prc(G)\geq k.$
\end{proposition}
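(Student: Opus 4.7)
The plan is to derive this bound immediately from Proposition \ref{Propo_average_degree}, which already gives $prc(G) \geq \lceil \overline{d}(G) \rceil$ for every nontrivial connected graph. The only computation needed is to translate the size hypothesis $|E(G)| \geq kn/2$ into a lower bound on the average degree.

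First I would write out the average degree of $G$, namely $\overline{d}(G) = \frac{2|E(G)|}{n}$, and substitute the hypothesis to obtain
$$\overline{d}(G) = \frac{2|E(G)|}{n} \geq \frac{2 \cdot kn/2}{n} = k.$$
Since $k$ is an integer, this immediately yields $\lceil \overline{d}(G) \rceil \geq k$. Applying Proposition \ref{Propo_average_degree} then gives $prc(G) \geq \lceil \overline{d}(G) \rceil \geq k$, which is the desired bound.

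There is essentially no obstacle here: the statement is a direct corollary of Proposition \ref{Propo_average_degree}, itself obtained from Vizing's Theorem together with the trivial inequality $\Delta(G) \geq \lceil \overline{d}(G) \rceil$. The only point worth being careful about is that $k$ is assumed to be an integer (otherwise one would only conclude $prc(G) \geq \lceil k \rceil$), but since the conclusion is the integer inequality $prc(G) \geq k$, this is the natural setting. No case analysis, no coloring construction, and no structural argument is needed.
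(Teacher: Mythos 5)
Your proposal is correct and follows exactly the same route as the paper: substitute the size hypothesis into $\overline{d}(G)=\frac{2|E(G)|}{n}\geq k$ and then invoke Proposition \ref{Propo_average_degree}. Your additional remark about $k$ being an integer is a sensible precision that the paper leaves implicit.
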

\begin{proof}
By the handshaking lemma we obtain $\frac{2\vert E(G)\vert}{n}=\overline{d}(G)\geq k$. Now the result follows by Proposition
\ref{Propo_average_degree}.
\end{proof}

\subsection{Size of graphs}
The problem of rainbow connection depending on size of graphs are studied by Kemnitz and Schiermeyer in \cite{Kemnitz2011} as follows: For every integer $k$ with $1\leq k\leq n-1$, compute and minimize the function $f(n,k)$ with the following property: If $\vert E(G)\vert\geq f(n,k)$, then $rc(G)\leq k$, where
$$f(n,k)\geq {{n-k+1}\choose 2}+(k-1).$$
It has been shown in \cite{Kemnitz2011,LLS,KPSW} that equality holds for $k=1,2,3,4,n-6,n-5,n-4,n-3,n-2,n-1.$
Now, we obtain the following result.

\begin{theorem}
\label{theorem_size_graph}
Let $G$ be a connected proper edge-coloured graph of order $n \geq 3.$ If
$$|E(G)| \geq {n-2 \choose 2} + 2,$$
then $prc(G)= \chi'(G)$ or $G \in \{P_4, Z_2, G_{6.3}\}.$
\end{theorem}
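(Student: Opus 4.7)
The plan is to combine the extremal Kemnitz--Schiermeyer bound recalled just above the theorem with Proposition \ref{Propo_diam}. Since $f(n,3)=\binom{n-2}{2}+2$ by \cite{Kemnitz2011}, the hypothesis already forces $rc(G)\leq 3$, and hence $diam(G)\leq 3$. If $diam(G)\leq 2$, Proposition \ref{Propo_diam} immediately delivers $prc(G)=\chi'(G)$, so the substance of the argument lies in the case $diam(G)=3$.

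Suppose therefore $diam(G)=3$, and fix $u,w\in V(G)$ with $d(u,w)=3$. Write $A=N(u)$, $B=N(w)$, $R=V(G)\setminus(\{u,w\}\cup A\cup B)$, and set $a=|A|$, $b=|B|$. Because $d(u,w)=3$, we have $A\cap B=\emptyset$, $uw\notin E(G)$, and $u$ (resp.\ $w$) is non-adjacent to every vertex of $B\cup R$ (resp.\ $A\cup R$). Hence
$$|E(G)| \;=\; a+b+|E(G-\{u,w\})| \;\leq\; a+b+\binom{n-2}{2}.$$
Combined with the hypothesis, this forces $a+b\geq 2$ and in fact bounds the number of edges missing from $G-\{u,w\}$ by $a+b-2$. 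In particular $G-\{u,w\}$ is nearly the complete graph $K_{n-2}$, and the bipartite edge set $E(A,B)$ must contain at least one edge (since $d(u,w)=3$) and in most cases many more.

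Next I would take an arbitrary proper edge-colouring with $\chi'(G)$ colours and argue that, possibly after a local recolouring, every pair at distance $3$ admits a rainbow $3$-path. Pairs at distance at most $2$ are handled exactly as in Proposition \ref{Propo_diam}. For a pair $(u',w')$ at distance $3$, each path $u'abw'$ of length three (corresponding to an edge $ab\in E(N(u'),N(w'))$) is rainbow if and only if $c(u'a)\neq c(bw')$; because a proper colouring is injective on the edges incident to $u'$ and on those incident to $w'$, a short counting/pigeonhole argument shows that whenever $E(N(u'),N(w'))$ has at least two edges with distinct endpoints, at least one such path is rainbow. The density obtained in the previous step guarantees this for all sufficiently large $n$.

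The main obstacle is the finite case analysis for small $n$, in which only a bounded list of graphs is compatible with $diam(G)=3$ and $|E(G)|\geq\binom{n-2}{2}+2$; concretely one only has to examine $n\in\{4,5,6\}$. Enumerating this list and computing $prc$ for each candidate pins down exactly three exceptional graphs: $P_4$ (for which $\chi'(P_4)=2$ but Theorem \ref{Thm_Jiang_prc_size} gives $prc(P_4)=3$ since $P_4$ is a tree), $Z_2$, and $G_{6.3}$, for the last two of which the discrepancy is verified by inspecting all proper edge-colourings with $\chi'$ colours. For every other graph in the list, the recolouring construction above produces a proper edge-colouring with $\chi'(G)$ colours that is rainbow connected, completing the proof.
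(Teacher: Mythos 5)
Your opening reduction (the Kemnitz--Schiermeyer bound gives $rc(G)\leq 3$, hence $diam(G)\leq 3$, and $diam(G)\leq 2$ is settled by Proposition \ref{Propo_diam}) is fine, but the heart of the argument has a genuine gap. The pigeonhole claim is false: two vertex-disjoint edges $a_1b_1,a_2b_2\in E(N(u'),N(w'))$ do \emph{not} force a rainbow $3$-path. A proper colouring is injective on the palette at $u'$ and on the palette at $w'$ separately, but nothing relates the two palettes, so one can have $c(u'a_1)=c(b_1w')=1$ and $c(u'a_2)=c(b_2w')=2$ simultaneously, killing both paths; this is precisely the phenomenon exhibited by the graph $F_8$ after Proposition \ref{Propo2_minimum}, where $E(U,W)$ is an induced matching and $prc(F_8)=4>3=\chi'(F_8)$. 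The observation that actually works (and is used in the paper) is that two edges of $E(N(u'),N(w'))$ \emph{sharing an endpoint} force a rainbow path. Your density estimate only guarantees that $E(A,B)$ is large, not that it contains such a configuration, and the fallback of a ``local recolouring'' is never specified: you must keep the colouring proper, keep the number of colours at $\chi'(G)$, and not destroy rainbow paths between other pairs, which is exactly the hard part (and is impossible for $Z_2$ and $G_{6.3}$). Likewise, the assertion that the exceptional analysis reduces to $n\in\{4,5,6\}$ is stated but not proved; since it rests on the false pigeonhole step, it is unsupported.

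For comparison, the paper takes a different and more explicit route: from the size hypothesis it derives $\Delta(G)\geq n-4$, fixes a vertex $w$ of maximum degree, and performs a case analysis on $\Delta(G)\in\{n-1,n-2,n-3,n-4\}$, combined with an induction on $n$ in which a suitable neighbour $w_i$ is deleted so that $G-w_i$ still satisfies the edge bound. It is inside this induction that the exceptional graphs $P_4$, $Z_2$, $G_{6.3}$ are isolated and that the absence of further exceptions for $n\geq 7$ is established. Your structural observation that $G-\{u,w\}$ misses at most $a+b-2$ edges of $K_{n-2}$ is a reasonable alternative starting point, but as written the proof does not go through for the distance-$3$ pairs.
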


\begin{proof}
Let $G$ be a connected graph of order $n \geq 3$ and size $|E(G)| \geq {n-2 \choose 2} + 2.$ Then
$\Delta(G) \geq \lceil\overline{d}(G)\rceil \geq \lceil n-5+\frac{10}{n}\rceil \geq n-4.$


Let $w \in V(G)$ be a vertex with $d(w) = \Delta(G),$ and let $N(w) = W = \{w_1, w_2, \ldots, w_{\Delta(G)}\}$ be its neighours.
We distinguish four cases.

\begin{enumerate}
\item
If $\Delta(G)=n-1,$ then $G[\{ww_i | 1 \leq i \leq n-1\}]$ induces a spanning subgraph $H$ of $G,$ which is rainbow-connected.
Hence $G$ is rainbow-connected.
\item
If $\Delta(G) = n-2,$ then let $V(G) \setminus N[w] = \{u\}.$ First observe that $N(u) \subset N(w).$ So we may assume that
$uw_i \in E(G)$ for $1 \leq i \leq d(u).$ If $d(u) \geq 2,$ then $G[\{ww_i | 1 \leq i \leq n-2\} \cup \{uw_1, uw_2\}]$ induces a spanning subgraph $H$ of $G,$ which is rainbow-connected.
Hence $G$ is rainbow-connected. If $d(u)=1,$ let $c(w_1u)=1, c(ww_1)=2.$ If $c(ww_i) \neq 1$ for $2 \leq i \leq n-2,$ then $G$ is rainbow-connected. Hence we may assume that $c(ww_2)=1.$ Then we are sure that all pairs of vertices $x,y \in V(G)$ are rainbow-connected except for the pair $(u, w_2),$ if $d(u,w_2) \geq 3.$ Hence we may assume that $w_1w_2 \notin E(G).$ Suppose there is a vertex $w_i \in N(w_1) \cap N(w_2)$ for some $3 \leq i \leq n-2.$ We may assume that $c(w_1w_i)=3.$
Then $c(w_2w_i) \neq 1,3$ and so $uw_1w_iw_2$ is a rainbow path. Suppose there is no such vertex $w_i.$ Then
$|E(\overline{G})| \geq 1 + (n-3) + 1 + (n-4) = 2n-5,$ which implies that $|E(G)| = {n-2 \choose 2} +2.$ Thus we have $d(u)=1$ and $d(w_1)+d(w_2)=3+(n-4)=n-1.$ Therefore,
$G-\{w_1,w_2,u\} \cong K_{n-3}.$ We may further assume that $w_2w_i \in E(G)$ for $3 \leq i \leq d(w_2)+1.$



Suppose first that $d(w_2) \geq 3.$ If $c(w_2w_3)=2,$ then $uw_1ww_4w_2$ is a rainbow $uw_2$-path. If
$c(w_2w_3) \neq 2,$ then $uw_1ww_3w_2$ is a rainbow $uw_2$-path.
Suppose next that $d(w_2)=2,$ which implies $n \geq 5.$ If $c(w_2w_3) \neq 2,$ then $uw_1ww_3w_2$ is a rainbow $uw_2$-path. Hence we may assume that
$c(w_2w_3)=2.$ Then $uw_1w_4w_3w_2$ is a rainbow $uw_2$-path for $n \geq 6.$
If $n=5,$ then $G \cong Z_2.$ Note that $\chi'(Z_2)=rc(Z_2=3,$ but $prc(Z_2)=4.$ Hence $Z_2$ is an exceptional graph.

Finally suppose that $d(w_2)=1.$ Then $G$ consists of a complete graph of order $n-2$ induced by $V(G) \setminus \{u,w_2\}$ and two pendant edges attached at $w$ and $w_1.$
If $n \geq 4$ is odd, then $\chi'(G)=n-2=\Delta(G).$ Observe that the $K_{n-2}$
is coloured with $n-2$ colours and that $w$ and $w_1$ have distinct palettes of colours for their incident edges. Hence the pendant edges $ww_2$ and $w_1u$ have distinct colours. This shows that $G$ is rainbow connected.
If $n \geq 4$ is even, then $G \cong P_4$ for $n = 4$ implying $prc(G)=rc(G)=3.$ Therefore, $P_4$ is an exceptional graph, since
$\chi'(P_4)=2.$

If $n \geq 6,$
take an optimal edge colouring of the $K_{n-2}$ using $n-3$ colours. Now switch the colour from $ww_3$ to the edge $ww_2,$
and colour the edges $ww_3$ and $w_1u$ with a new colour. Now observe that this colouring makes $G$ properly rainbow connected.\\

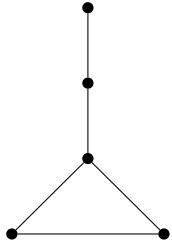
\begin{figure}
\begin{center}
\begin{tikzpicture}
\node [bnode] (u1) at (0,0){};
\node [bnode] (u2) at (-1,-1){};
\node [bnode] (u3) at (1,-1){};
\node [bnode] (u4) at (0,1){};
\node [bnode] (u5) at (0,2){};
\draw (u1)--(u2)--(u3)--(u1);
\draw (u1)--(u4)--(u5);
\end{tikzpicture}
\caption{Graph $Z_2$}
\end{center}
\end{figure}

\item
If $\Delta(G) = n-3,$ then let $V(G) \setminus N[w] = U = \{u_1,u_2\}.$ We first distinguish two cases.

\noindent{\bf Case 1} $u_1u_2 \in E(G)$\\
We first show that $G - w_i$ is connected for $1 \leq i \leq n-3.$ Suppose that
$G - w_i$ is disconnected for some $i$ with $1 \leq i \leq n-3.$ Then
$|E(G)| \leq |E(G[N[w]-w_i])|+|E(G[\{u_1,u_2\}])|+d(w_i) \leq {n-3 \choose 2} + 1 + (n-3) < {n-2 \choose 2} + 2,$
a contradiction.
Now $G-w_1$ and $G-w_2$ both have size at least ${n-2 \choose 2} + 2 - (n-3) = {(n-1)-2 \choose 2} + 2.$ So by induction both
$G-w_1$ and $G-w_2$ are rainbow connected or an exceptional graph. Furthermore $w_1$ and $w_2$ are rainbow connected, since $1 \leq d(w_1,w_2) \leq 2.$ This shows that $G$ is rainbow connected or an exceptional graph.

\noindent{\bf Case 2} $u_1u_2 \notin E(G)$\\
Let $d(u_1) \geq d(u_2) \geq 1.$ Then $d(w) + \sum_{i=1}^{n-3}d(w_i) \leq (n-3) + (n-3)(n-3) - d(u_1)-d(u_2)$ implying
$|E(G)| \leq {n-2 \choose 2} + \frac{d(u_1)+d(u_2)}{2} < {n-2 \choose 2} + 2$ for $d(u_1)+d(u_2) \leq 3,$ a contradiction.
Hence we may assume that $d(u_1) + d(u_2) \geq 4.$ Now if $d(u_1) \geq 3, d(u_2)=1$ or $d(u_1) \geq 2, d(u_2) \geq 2,$ then there are always
two vertices $w_i, w_j$ such that $G - w_i$ and $G - w_j$ are both connected. This shows that $G$ is rainbow connected or an exceptional graph.

This discussion shows, that in both cases $G$ is either properly rainbow connected or an exceptional graph.
So suppose that $m(G - w_i) = {n-3 \choose 2} + 2$ and $G - w_i$ is not properly rainbow connected for some $1 \leq i \leq d(w).$ We may choose the labeling of the vertices of
$G$ such that $G - w_1$ is not properly rainbow connected.

If $n=5,$ then $G - w_1 \cong P_4.$ Taking into account that $d(w_1)=2=\Delta(G)$ we conclude that $G \cong C_5.$ Now we have
$prc(C_5)=3=rc(C_5)=\chi'(C_5).$

If $n=6,$ then $G - w_1 \cong Z_2.$ Now up to isomorphism the following three graphs $G_{6.1},G_{6.2}$ and $G_{6.3}$ are possible.
\begin{enumerate}
\item
$G_{6.1}: c(ww_2)=c(w_1w_3)=c(u_1u_2)=1, c(ww_1)=c(w_2w_3)=2, c(ww_3)=c(w_1u_1)=c(w_2u_2)=3.$\\
This shows that $prc(G_{6.1})=rc(G_{6.1})=\chi'(G_{6.1})=3.$
\item
$G_{6.2}: c(ww_2)=c(w_1w_3)=c(u_1u_2)=1, c(ww_1)=c(w_2w_3)=2, c(ww_3)=c(w_1u_1)=3, c(w_2u_1)=4.$\\
Observe that $\chi'(G_{6.2})=4.$ This shows that $prc(G_{6.2})=\chi'(G_{6.2})=4,$ whereas $rc(G_{6.2})=3.$
\item
For $G_{6.3}$ we can show that $rc(G_{6.3})=\chi'(G_{6.3})=3,$ whereas $prc(G_{6.3})=4.$ Up to a permutation of the colours $G_{6.3}$ has to be coloured as follows:
$G_{6.3}: c(ww_1)=c(w_2w_3)=c(u_1u_2)=1, c(ww_2)=c(w_1u_2)=2, c(ww_3)=c(w_2u_2)=c(w_1u_1)=3.$ Thus $G_{6.3}$ is an exceptional graph.
\end{enumerate}

If $n=7,$ then $G - w_1 \cong G_{6.3}.$ Observe that $d(w_1)=4.$ Now we can always find two vertices $x,y \in V(G_{6.3})$ such that $d(x,y) \leq 2,$ $G - x$ and
$G - y$ are connected, and $\Delta(G-x)=\Delta(G-y)=4.$ Hence, $G$ is no exceptional graph. No by induction it follows that there are no exceptional graphs
$G$ with size ${n-2 \choose 2} + 2$ for all $n \geq 7.$

\begin{figure}
\begin{minipage}{0.28\textwidth}
\begin{center}
\begin{tikzpicture}
\node [bnode] (w3) at (0,0){};
\node [below] at (0,0) {$w_3$};
\node [bnode] (w1) at (-1.5,0){};
\node [below left] at (-1.5,0) {$w_1$};
\node [bnode] (w2) at (1.5,0){};
\node [below right] at (1.5,0) {$w_2$};
\node [bnode] (w) at (0,1.5){};
\node [above] at (0,1.5) {$w$};
\node [bnode] (u1) at (-0.75,-1.5) {};
\node [below] at (-0.75,-1.5) {$u_1$};
\node [bnode] (u2) at (0.75,-1.5) {};
\node [below] at (0.75,-1.5) {$u_2$};
\draw (w1)--(w)--(w2)--(u2)--(u1)--(w1)--(w3)--(w2);
\draw (w3)--(w);
\node [above] at (0,-1.5) {\scriptsize $1$};
\node [below left] at (-1,-0.75) {\scriptsize $3$};
\node [below right] at (1,-0.75) {\scriptsize $3$};
\node [below] at (-0.75,0) {\scriptsize $1$};
\node [below] at (0.75,0) {\scriptsize $2$};
\node [above left] at (-0.75,0.5) {\scriptsize $2$};
\node [above right] at (0.75,0.5) {\scriptsize $1$};
\node [left] at (0,0.75) {\scriptsize $3$};
\end{tikzpicture}
\caption{Graph $G_{6.1}$}
\end{center}
\end{minipage}
\begin{minipage}{0.4\textwidth}
\begin{center}
\begin{tikzpicture}
\node [bnode] (w3) at (0,0){};
\node [below] at (0,0) {$w_3$};
\node [bnode] (w1) at (-1.5,0){};
\node [below left] at (-1.5,0) {$w_1$};
\node [bnode] (w2) at (1.5,0){};
\node [below right] at (1.5,0) {$w_2$};
\node [bnode] (w) at (0,1.5){};
\node [above] at (0,1.5) {$w$};
\node [bnode] (u1) at (-0.75,-1.5) {};
\node [below] at (-0.75,-1.5) {$u_1$};
\node [bnode] (u2) at (0.75,-1.5) {};
\node [below] at (0.75,-1.5) {$u_2$};
\draw (w1)--(w)--(w2)--(u1)--(w1)--(w3)--(w2);
\draw (w3)--(w);
\draw (u1)--(u2);
\node [below] at (0,-1.5) {\scriptsize $1$};
\node [below left] at (-1.1,-0.55) {\scriptsize $3$};
\node [above] at (-0.75,0) {\scriptsize $1$};
\node [above] at (0.75,0) {\scriptsize $2$};
\node [above left] at (-0.75,0.5) {\scriptsize $2$};
\node [above right] at (0.75,0.5) {\scriptsize $1$};
\node [left] at (0,0.75) {\scriptsize $3$};
\node [below left] at (0.85,-0.55) {\scriptsize $4$};
\end{tikzpicture}
\caption{Graph $G_{6.2}$}
\end{center}
\end{minipage}
\begin{minipage}{0.28\textwidth}
\begin{center}
\begin{tikzpicture}
\node [bnode] (w2) at (0,0){};
\node [below left] at (0,0) {$w_2$};
\node [bnode] (w1) at (-1.5,0){};
\node [above left] at (-1.5,0) {$w_1$};
\node [bnode] (w3) at (1.5,0){};
\node [above right] at (1.5,0) {$w_3$};
\node [bnode] (w) at (0,1.5){};
\node [above] at (0,1.5) {$w$};
\node [bnode] (u1) at (-0.75,-1.5) {};
\node [below] at (-0.75,-1.5) {$u_1$};
\node [bnode] (u2) at (0.75,-1.5) {};
\node [below] at (0.75,-1.5) {$u_2$};
\draw (w1)--(w)--(w3)--(w2)--(u2)--(u1)--(w1);
\draw (w1)--(u2);
\draw (w)--(w2);
\node [below] at (0,-1.5) {\scriptsize $1$};
\node [below left] at (-1.1,-0.55) {\scriptsize $3$};
\node [above] at (0.75,0) {\scriptsize $1$};
\node [above left] at (-0.75,0.5) {\scriptsize $1$};
\node [above right] at (0.75,0.5) {\scriptsize $3$};
\node [left] at (0,0.75) {\scriptsize $2$};
\node [below left] at (0.85,-0.55) {\scriptsize $3$};
\node [below] at (0,-0.55) {\scriptsize $2$};
\end{tikzpicture}
\caption{Graph $G_{6.3}$}
\end{center}
\end{minipage}
\end{figure}

\item
If $\Delta(G) = n-4,$ then $n \geq 7.$ Let $V(G) \setminus N[w] = U = \{u_1,u_2,u_3\}.$ We distinguish three cases.

\noindent{\bf Case 1} $U$ is connected\\
We first show that $G - w_i$ is connected for $1 \leq i \leq n-4.$ Suppose that
$G - w_i$ is disconnected for some $i$ with $1 \leq i \leq n-4.$ Then
$|E(G)| \leq |E(G[N[w]-w_i])|+|E(G[\{u_1,u_2,u_3\}])|+d(w_i) \leq {n-4 \choose 2} + 3 + (n-4)
= {n-3 \choose 2} + 3 < {n-2 \choose 2} + 2,$
a contradiction. Hence we may assume that $|N(U) \cap W| \geq 2.$

\medskip
\noindent{\bf Fact 1:} Then there are always two vertices $w_i, w_j$ such that $G - w_i$ and $G - w_j$ are both connected.
Now $G-w_i$ and $G-w_j$ both have size at least ${n-2 \choose 2} + 3 - (n-4) = {(n-1)-2 \choose 2} + 4.$ So by induction both
$G-w_i$ and $G-w_j$ are rainbow connected. Furthermore $w_1$ and $w_2$ are rainbow connected, since $1 \leq d(w_1,w_2) \leq 2.$ This shows that $G$ is rainbow connected.

\noindent{\bf Case 2} $|E(U)|=1$\\
We may assume that $E(U)=u_1u_2.$ If $|E(U,W)| \leq 3,$ then $\sum_{v \in V(G)}d(v) \leq (n-3)(n-4)+2+3$ implying
$m(G) \leq {n-3 \choose 2} + 2 < {n-2 \choose 2} + 2,$ a contradiction. Hence we may assume that $|E(U,W)| \geq 4.$ Now
considering the two components $\{u_1, u_2\}$ and $u_3$ of $U$ as two vertices we can follow the previous Case 2 for $\Delta(G)=n-3.$

\noindent{\bf Case 3} $E(U)= \emptyset$\\
Let $d(u_1) \geq d(u_2) \geq d(u_3) \geq 1.$ If $d(u_3) \geq 2,$ then there are always two vertices $w_i, w_j$ such that $G - w_i$ and
$G - w_j$ are connected. Moreover, $d(w_i, w_j) \leq 2$ and we apply Fact 1.
So we may assume that $d(u_3)=1.$ Let $u_3w_1 \in E(G).$ If $|E(\{u_1, u_2\},W-w_1| \geq 4,$ then there are two vertices $w_i, w_j, 2 \leq i < j \leq j$ such that
$G- w_i$ and $G - w_j$ are connected and we apply Fact 1.
Hence we may assume that $|E(U,W)| \leq 3 + 3 = 6.$ This implies $|E(G)| \leq {n-3 \choose 2} + 6 < {n-2 \choose 2} + 2,$ a contradiction.


\end{enumerate}
\end{proof}

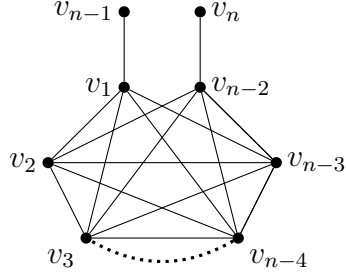
\begin{figure}
\begin{center}
\begin{tikzpicture}
\node [bnode] (v1) at (0,0) {};
\node [left] at (0,0) {$v_1$};
\node [bnode] (v2) at (-1,-1) {};
\node [left] at (-1,-1) {$v_2$};
\node [bnode] (v3) at (-0.5,-2) {};
\node [below left] at (-0.5,-2) {$v_3$};
\node [bnode] (v4) at (1.5,-2) {};
\node [below right] at (1.5,-2) {$v_{n-4}$};
\node [bnode] (v5) at (2,-1) {};
\node [right] at (2,-1) {$v_{n-3}$};
\node [bnode] (v6) at (1,0) {};
\node [right] at (1,0) {$v_{n-2}$};
\draw (v1)--(v2)--(v3);
\draw (v4)--(v5)--(v6);
\draw (v1)--(v3);
\draw (v1)--(v4);
\draw (v1)--(v5);
\draw (v2)--(v4);
\draw (v2)--(v5);
\draw (v2)--(v6);
\draw (v3)--(v4);
\draw (v3)--(v5);
\draw (v3)--(v6);
\draw (v4)--(v5);
\draw (v4)--(v6);
\draw (v5)--(v6);
\draw (v4) [dotted, very thick, bend left] to (v3);
\node [bnode] (v7) at (0,1) {};
\node [left] at (0,1) {$v_{n-1}$};
\node [bnode] (v8) at (1,1) {};
\node [right] at (1,1) {$v_n$};
\draw (v1)--(v7);
\draw (v6)--(v8);
\end{tikzpicture}
\caption{Graph $F_n$, where $n$ is even}
\end{center}
\end{figure}

\noindent{\bf Sharpness:} For even $n \geq 6$ take a complete graph of order $n-2$ and label its vertices $v_1, v_2, \ldots, v_{n-2}.$ Now we add two vertices $v_{n-1}, v_n,$ add the edges $v_1v_{n-1},v_{n-2}v_n,$ and delete the edge $v_1v_{n-2}.$ Let $F_n$ denote the resulting graph. Then $|E(F_n)|= {n-2 \choose 2}+1, \Delta(F_n)=\chi'(F_n)=n-3, rc(F_n)=diam(F_n)=4,$ but $prc(G)=n-2,$ if $n$ is even.
This can be seen as follows. In any edge colouring of $F_n$ using $n-3$ colours, $F_n - \{v_{n-2}, v_{n-1}, v_n\}$ uses all $n-3$ colours, each on $\frac{n-4}{2}$ edges. Applying the same argument on $F_n - \{v_1, v_{n-1}, v_n\},$ we deduce that $v_1$ and $v_{n-2}$ have the same palette of $n-4$ colours for their incident edges in $F_n - \{v_{n-1},v_n\}.$ Suppose these colours are
$1, 2, \ldots, n-4.$ Then both edges $v_1v_{n-1}$ and $v_{n-2}v_n$ obtain colour $n-3.$ But then there is no rainbow
$v_{n-1}v_n$-path.



\end{document}